\let\CMSKernelLabel\label
\let\CMSKernelRefstepcounter\refstepcounter
\documentclass[final,oneeqnum,onefignum,onetabnum]{cmslatex}
\usepackage{cms-compat}

\usepackage[utf8]{inputenc}
\usepackage[T1]{fontenc}
\usepackage{lmodern,microtype}
\usepackage{amsmath,amssymb,amsthm,mathtools,mathrsfs,bm}
\usepackage{booktabs,tabularx,array,enumitem,graphicx}
\usepackage[numbers,sort&compress]{natbib}
\usepackage{xcolor}
\definecolor{cmslink}{RGB}{0,65,120}
\usepackage{hyperref}
\newcommand{\papertitle}{Curvature Bounds for the Cost of Coherence in Conditional Optimal Transport}
\hypersetup{colorlinks=true,allcolors=cmslink,
  pdftitle={\papertitle},
  pdfauthor={Xiaozhen WANG, Zihan Zhang, and Xiang Zhou}}
\usepackage[nameinlink,noabbrev]{cleveref}
\newcommand{\doi}[1]{\begingroup\urlstyle{same}\url{https://doi.org/#1}\endgroup}

\newtheorem{theorem}{Theorem}[section]
\newtheorem{proposition}[theorem]{Proposition}
\newtheorem{lemma}[theorem]{Lemma}
\newtheorem{corollary}[theorem]{Corollary}
\theoremstyle{definition}
\newtheorem{definition}[theorem]{Definition}
\newtheorem{assumption}{Assumption}[section]

\theoremstyle{remark}

\crefname{assumption}{Assumption}{Assumptions}
\Crefname{assumption}{Assumption}{Assumptions}

\makeatletter
\renewcommand{\eqref}[1]{\hyperref[#1]{\textup{\tagform@{\ref*{#1}}}}}
\makeatother

\newcommand{\X}{\mathcal X}
\newcommand{\V}{\mathcal V}
\newcommand{\Eloc}{\mathcal E_{\mathrm{loc}}}

\newcommand{\Dcoh}{\Delta_{\mathrm{coh}}}
\newcommand{\Aact}{\mathcal A}
\newcommand{\Qcoh}{\mathscr Q}
\newcommand{\mubar}{\overline\mu}
\newcommand{\dif}{\,\mathrm d}
\newcommand{\E}{\mathbb E}
\newcommand{\R}{\mathbb R}
\newcommand{\T}{\mathbb T}
\DeclareMathOperator{\diver}{div}
\DeclareMathOperator{\Fl}{Fl}

\setlist[enumerate]{itemsep=3pt,topsep=5pt}
\allowdisplaybreaks[1]

\title{Curvature Bounds for the Cost of Coherence\\in Conditional Optimal Transport}
\author{Xiaozhen~WANG\thanks{Centre de Math\'ematiques Appliqu\'ees (CMAP),
  \'Ecole Polytechnique, Institut Polytechnique de Paris, Palaiseau, France.
  E-mail: \href{mailto:xiaozhen.wang@polytechnique.edu}{xiaozhen.wang@polytechnique.edu}.}
  \and Zihan~Zhang\thanks{Sorbonne Universit\'e, Paris, France.
  E-mail: \href{mailto:zihan.zhang@etu.sorbonne-universite.fr}{zihan.zhang@etu.sorbonne-universite.fr}.}
  \and Xiang~Zhou\thanks{Sorbonne Universit\'e, Paris, France.
  E-mail: \href{mailto:xiang.zhou@etu.sorbonne-universite.fr}{xiang.zhou@etu.sorbonne-universite.fr}.}}
\date{}
\begin{document}
\maketitle

\begin{abstract}
We study the gap between the integrated direction-wise kinetic minima
and the minimum expected Dirichlet energy of a joint particle realization
of a prescribed two-parameter family of conditional probability measures.
Under regular coarea assumptions with compact connected fibres and a
star-shaped parameter domain, we derive a curvature lower bound valid
for every admissible law of Sobolev maps realizing the prescribed smooth
positive conditional measures.
Our method combines the weighted-Poisson characterization of local
optimal-transport velocities with an exact energy-residual identity,
a weak Stokes formula, and explicit transport constructions.
The minimum excess vanishes exactly when the Lie-bracket curvature
vanishes throughout the parameter domain, whereas at a nonzero-curvature
centre it is of order $r^4$ as $r\to0$ on squares of half-side length $r$
with unnormalized area measure.
In a two-phase torus model fitted to electricity-demand data, numerical
tests illustrate this quartic order for explicit couplings, with radial
transport approximately halving the ordered construction's excess on
the tested small squares.

\end{abstract}
\begin{keywords}
Optimal transport; Multimarginal coupling; Dirichlet energy;
Lie-bracket curvature; Variational bounds.
\end{keywords}
\begin{AMS}
49Q22; 49J45; 58A30.
\end{AMS}

\section{Introduction}
\label{sec:introduction}
Realizing a prescribed two-parameter family of probability measures by
the same particles costs Dirichlet energy. We ask how much more energy
this joint realization requires than the sum of the two direction-wise
kinetic minima.
Quadratic optimal transport (OT) couples two prescribed distributions while
minimizing mean squared displacement \citep{Villani2009,Santambrogio2015}.
Its dynamic formulation minimizes kinetic action over the intervening
distributions and velocities \citep{BenamouBrenier2000}.
For a prescribed distribution curve, the least $L^2$ norm of a velocity
satisfying the continuity equation equals the Wasserstein metric speed
\citep{AmbrosioGigliSavare2008}.
Under suitable finite-energy assumptions, the superposition principle
represents this motion by a probability law on particle paths
\citep{Lisini2007,StepanovTrevisan2017}.
Particle laws are also the optimization variables in Brenier's
generalized least-action flows \citep{Brenier1989,AmbrosioFigalli2009}.
For several parameters, extended Monge--Kantorovich theory
\citep{Brenier2003} and
Wasserstein-valued harmonic maps \citep{Lavenant2019} provide energy
formulations in terms of maps and local velocity fields.

Lavenant's lifting framework compares the minimum energy of local
velocity fields with the minimum average energy of random maps
realizing the same distributions \citep{LavenantLifting2024}.
These energies need not agree. Lavenant identifies the formal local
compatibility condition \citep[Section~5.3]{Lavenant2019}; his lifting
framework also gives attainment under coercivity and finite energy,
and a radial construction of local approximate realizations
\citep[Proposition~3.13 and proof of Theorem~5.10]{LavenantLifting2024}.
We quantify this difference for smooth conditional measures on moving
level sets, with a lower bound for all admissible Sobolev realization laws and a
sharp small-square exponent.

Let $\mu_y$, $y=(s,t)\in U\subset\R^2$, be obtained by conditioning on
two statistics in a Riemannian state space $(\X,g)$.
Choose in each direction the least-energy velocity reproducing the
distribution change and moving particles between the corresponding
level sets. The resulting fields $v_s,v_t$ need not
describe one common particle motion: changing $s$ then $t$ may send
a particle to a different endpoint from the reverse order, although
both orders give the correct final distribution.
A coherent realization gives each particle a map $\sigma:U\to\X$,
assigning its position at every condition $y$.
A probability law $Q$ on these maps couples all prescribed distributions:
it is an infinite-marginal transport plan \citep{Pass2013,Pass2015}.
We allow Sobolev maps with finite expected energy. The two local
minimizers give the benchmark
\[
 \Eloc=\frac12\int_U\int_\X
       \bigl(|v_s|_g^2+|v_t|_g^2\bigr)\dif\mu_y\dif y.
\]
Our problem is to find the least additional action needed for a joint
realization:
\[
 \begin{aligned}
 \Dcoh=\inf_Q\quad&
 \frac12\E_Q\int_U
 \bigl(|\partial_s\sigma|_g^2+|\partial_t\sigma|_g^2\bigr)\dif y-\Eloc,\\
 \text{subject to}\quad&
 \operatorname{Law}_Q(\sigma(y))=\mu_y
                  \quad\text{for a.e. }y\in U.
 \end{aligned}
\]
Here $\E_Q$ averages over particle maps, and
$\operatorname{Law}_Q(\sigma(y))$ is the distribution of their values at $y$.
The squared weak derivatives
are integrated over parameter area, with unnormalized measure
$\dif y=\dif s\dif t$, not along a single path.
\Cref{def:coupling} gives the precise
Sobolev formulation of the marginal constraint.
In a product chamber with product metric, both energies contain the
same base contribution $|U|$. Subtracting it gives the local-field and
random-map Dirichlet energies on the fixed fibre. The gap is therefore
the fibre-valued version of the Lagrangian--Eulerian lifting gap, with
the conditional geometry determining the local minimizers.

Under the regular coarea assumptions of \Cref{subsec:coarea-setting},
the Lie bracket $\Omega=[v_s,v_t]$ measures the noncommutativity of the
local velocities and gives a lower bound for every admissible
Sobolev coupling's excess action (\Cref{thm:certificate}).
At a centre with nonzero curvature, this bound and successive coordinate
transports give matching $r^4$ estimates for the minimum excess on
squares of half-side length $r$ (\Cref{thm:quartic-action}).
The constants depend on the local smooth data. Thus optimizing over all
joint realization laws, including rough or randomized realizations,
cannot remove the energy cost of noncommuting local OT velocities.
The proof expresses excess action as a squared $L^2$ residual and controls
curvature by that residual through a weak Stokes formula on Sobolev
sections. We verify attainment using the lifting compactness argument.
The minimum excess is zero
exactly when $\Omega$ vanishes, and the minimizing law is then unique
(\Cref{thm:flatness}). The smooth realization in this case is the
Frobenius integrability mechanism for commuting fields. The leading
displacement between the two coordinate orders
(\Cref{thm:order-defect}) and an explicit nonflat torus example
(\Cref{prop:torus-example}) make the obstruction concrete.

Conditional generators give a practical instance of the joint-law
problem: a fixed random seed follows one sample across conditions.
Shared randomness also underlies climate-XVA comparisons, where
climate and baseline simulations use common shocks
\citep{WangBuetGolfouse2026ClimateDyna}.
For smooth positive laws in our setting, weighted Poisson equations
determine the local OT velocities. The generator's derivatives
may differ from these velocities even when both reproduce exactly the
same laws. Thus curvature concerns the local OT fields, while the
generator's excess action measures the cost of its particular coupling.
Flow-based density estimation constructs maps to a reference law through
likelihood-increasing particle dynamics \citep{TabakVandenEijnden2010}.
Tabak, Trigila, and Zhao develop normalizing-flow methods for conditional
barycenters, mapping conditional laws to a common reference family while
retaining dependence on selected covariates \citep{TabakTrigilaZhao2024}.
Related methods include Sobolev descent \citep{MrouehSercuRaj2019},
conditional maps learned from paired populations \citep{BunneKrauseCuturi2022},
entropic conditional transport \citep{BaptistaEtAl2025}, and dynamic
conditional OT \citep{KerriganEtAl2024}.
Generative transfer constructs new entropic couplings from a reference
coupling under a shared unknown cost \citep{DebouchageWangRenBuetGolfouse2026};
path-independent flow matching learns compatible multi-parameter dynamics
\citep{TellezEtAl2026}.

Otto's Riemannian interpretation \citep{Otto2001}, the parallel transport
of Ambrosio and Gigli \citep{AmbrosioGigli2008}, and the second-order
calculus of Lott and Gigli \citep{Lott2008,Gigli2012} describe Wasserstein
geometry, with a finite-dimensional Gaussian setting \citep{Takatsu2011}.
Khesin and Lee \citep{KhesinLee2009} study a nonholonomic transport geometry.
Optimal-transport holonomy describes particle rearrangement after a
closed distribution path, for Gaussian families \citep{AbdelgalilGeorgiou2026}
and smooth transports \citep{AbdelgalilGeorgiouSmooth2026}.
Our Lie-bracket curvature concerns particle motions, rather than the
Riemannian curvature tensor of Wasserstein space.
Computational counterparts include dynamic transport and Dirichlet problems
on triangulated surfaces \citep{LavenantEtAl2018}.
Disintegrated transport compares two measure families within metric
fibres \citep{KitagawaTakatsu2024}; we seek one family's joint realization
across fibres.
Moser's theorem \citep{Moser1965} and regular representations of measure
families \citep{JostEtAl2019} give smooth realizations; matching both
kinetic minimizers is an additional constraint.
Schr\"odinger bridges \citep{Leonard2014} and entropic transport
\citep{PeyreCuturi2019} add an entropy cost.
Related variants impose martingale constraints
\citep{ChenConfortiRenWang2026} or use nonlinear convex functionals
of the coupling \citep{KazeykinaRenWangZhang2025}. We retain unregularized quadratic action.

Our numerical example fits a smooth density for two Fourier phases of
electricity-demand profiles and uses a fixed decoder to display samples.
For this fitted torus family, we compute the local OT fields, test the
curvature prediction for displacement, and compare explicit couplings
without changing the marginals. Radial transport approximately halves
the ordered construction's excess on the tested small squares; a
numerically evaluated curvature bound gives a lower benchmark. These
calculations concern the fitted phase model and its coordinate metric,
not the full distribution of demand profiles or the unrestricted minimum.
\Cref{sec:main-results} states the results,
\Cref{sec:numerical-experiments} presents the explicit torus benchmark
and a fitted conditional phase model, and \Cref{sec:proofs} gives the proofs.
We use $\partial_i$ for parameter derivatives, $\mathrm d$ for
differentials and exterior derivatives, and $\nabla_VW$ for covariant
differentiation with the Levi-Civita connection of $g$.
For a scalar function $f$, $\nabla f$ denotes its Riemannian gradient;
subscripts or superscripts specify a fibre or a different metric.

\section{Main Results}
\label{sec:main-results}
We now formulate the action-gap problem for conditional measures on
smooth level sets. We first specify the geometry, the local OT velocities,
and the admissible joint couplings. We then state the curvature bounds
and the constructions that attain their small-square order.

\subsection{Conditional measures and transport energies}
\label{subsec:coarea-setting}

A particle state $x$ determines two quantities $H(x)=(s,t)$.
Conditioning on them gives the family to be transported. We fix the
Euclidean parameter coordinates and measure particle velocities with $g$.

\begin{assumption}[Regular coarea chamber]
\label{ass:geometry}
The manifold $(\X,g)$ is smooth, connected, complete, and without boundary.
There are an open star-shaped set $U^+\subset\R^2$ and a bounded open
star-shaped Lipschitz domain $U\subset\R^2$ with
$\overline U\Subset U^+$. There is a smooth map $H:\X\to\R^2$ whose
restriction $H:H^{-1}(U^+)\to U^+$ is a surjective proper submersion with
connected fibres. The reference probability has a smooth strictly positive
density $p$ with respect to Riemannian volume on $\X$.
\end{assumption}

For a prescribed value $y$, the possible particle states form the fibre
$M_y=H^{-1}(y)$. These fibres are compact and have no boundary.
We write $E=H^{-1}(U)$ for their union over the parameter domain and
$E^+=H^{-1}(U^+)$ for the larger regular region.
The coarea formula \citep{Federer1969} gives the conditional probability
\begin{equation}
 \dif\mu_y(x)=\frac{p(x)}{Z(y)J_H(x)}\dif\mathrm{vol}_{M_y}(x),
 \label{eq:coarea}
\end{equation}
where $\mathrm{vol}_{M_y}$ is the volume induced by $g$ and
\[
 J_H(x)=\sqrt{\det(\mathrm dH_x(\mathrm dH_x)^*)},\qquad
 Z(y)=\int_{M_y}\frac{p}{J_H}\dif\mathrm{vol}_{M_y}.
\]
Here $\mathrm dH_x$ is the differential, its adjoint uses $g$ and the
Euclidean base metric, and $Z(y)$ normalizes the measure. Define
\begin{equation}
 \int_E f\dif\mubar
 =\int_U\int_{M_y}f(x)\dif\mu_y(x)\dif y.
 \label{eq:occupation-measure}
\end{equation}
Thus $\mubar(E)=|U|$, where $|U|$ is the Euclidean area of $U$ and
$\dif y=\dif s\dif t$. Norms use $g$ and its dual; $L^2$ norms use
$\mubar$, or $\mu_y$ when restricted to a fibre. The tangent bundle
$TE$ consists of the spaces $T_xE$, $x\in E$.
The space $L^2(E,\mubar;TE)$ consists of measurable vector fields
$V(x)\in T_xE$ with $\int_E|V|_g^2\dif\mubar<\infty$.

Write $e_s=(1,0)$ and $e_t=(0,1)$ for the parameter coordinate vectors.
For $i\in\{s,t\}$, choose the least-energy velocity producing the
prescribed change in direction $e_i$:
\begin{equation}
 \begin{aligned}
 \underset{V\in L^2(E,\mubar;TE)}{\operatorname{minimize}}\quad&
       \frac12\int_E |V|_g^2\dif\mubar\\
 \text{subject to}\quad&\mathrm dH(V)=e_i\quad\mubar\text{-a.e.},\\
 &\int_E\dif\varphi(V)\dif\mubar=0
              \quad\text{for all }\varphi\in C_c^1(E).
 \end{aligned}
 \label{eq:local-minimum}
\end{equation}
Here $C_c^1(E)$ denotes the continuously differentiable functions with
compact support in $E$. The constraints prescribe, respectively, the particle's condition and
the transported distribution. They are imposed separately in each direction.
The normal vector $n_i=(\mathrm dH)^*(\mathrm dH(\mathrm dH)^*)^{-1}e_i$
minimizes the first constraint alone. Every other feasible velocity
differs by a vector in the vertical tangent bundle $\V=\ker\mathrm dH$,
whose fibre at $x$ is $\V_x=T_xM_{H(x)}\subset T_xE$.
Writing $\nabla_{M_y}$ for the fibre gradient, with $y=H(x)$, solve
\begin{equation}
 \begin{gathered}
 \int_E g(\nabla_{M_y} u_i,\nabla_{M_y}\varphi)\dif\mubar
 =-\int_E\dif\varphi(n_i)\dif\mubar\quad(\varphi\in C_c^1(E)),\\
 \int_{M_y}u_i\dif\mu_y=0\quad\text{a.e. }y.
 \end{gathered}
 \label{eq:poisson}
\end{equation}
Tests are defined on the total space. The notation $L^2(U;H^1(M_y))$
means that $u$ and its fibre gradient are square integrable under
$\mubar$; \Cref{ass:local} supplies fixed-fibre coordinates.

\begin{proposition}[Local kinetic projection]
\label{prop:local-ot}
Under \Cref{ass:geometry},
\eqref{eq:poisson} has a unique solution in $L^2(U;H^1(M_y))$ with the
stated normalization. It has a smooth representative extending to $E^+$,
obtained by solving the same normalized fibre problem there. The fields
\begin{equation*}
 v_i=n_i+\nabla_{M_y} u_i
\end{equation*}
are smooth and have finite occupation energy. Each $v_i$ is the unique
minimizer of \eqref{eq:local-minimum}, and every feasible $V$ satisfies
\begin{equation*}
 \|V\|_{L^2(\mubar)}^2
 =\|v_i\|_{L^2(\mubar)}^2+\|V-v_i\|_{L^2(\mubar)}^2.
\end{equation*}
\end{proposition}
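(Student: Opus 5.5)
The argument has three parts. First we solve \eqref{eq:poisson} fibre by fibre with the Lax--Milgram theorem. Next we obtain smoothness through a fixed-fibre trivialization. Finally we read off the minimality and the Pythagorean identity from an orthogonal decomposition.

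\emph{Step 1: fibrewise reformulation.} Test \eqref{eq:poisson} with $\varphi=\chi(H)\psi$, where $\chi\in C_c^1(U)$ and $\psi\in C_c^1(E)$. The term $\nabla_{M_y}\varphi$ only sees $\chi(y)\nabla_{M_y}\psi$, while $\dif\varphi(n_i)$ contains the extra term $\partial_i\chi\,\psi$. Using the coarea disintegration \eqref{eq:occupation-measure}, the right-hand side equals $-\int_U\int_{M_y}\dif\psi(n_i)\,\chi\dif\mu_y\dif y$ plus $-\int_U\partial_i\chi\int_{M_y}\psi\dif\mu_y\dif y$. We must identify the second part as a fibrewise source term. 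For this we use the transport identity
\[
\partial_i\int_{M_y}\psi\dif\mu_y=\int_{M_y}\bigl(\dif\psi(n_i)+\psi\,\diver_{\mubar}(n_i)\bigr)\dif\mu_y,
\]
where $\diver_{\mubar}$ is the divergence with respect to the smooth positive density of $\mubar$ relative to $\mathrm{vol}_g$ on $E$; this density is $p/Z(H)$ by \eqref{eq:coarea}. After integrating by parts in $y$, the equation becomes the weighted Poisson problem
\[
-\diver_{M_y}^{\mu_y}\nabla_{M_y}u_i=f_i,\qquad f_i:=\diver_{\mubar}(n_i)\ (\text{smooth on }E^+),\qquad \int_{M_y}f_i\dif\mu_y=0,
\]
on each fibre. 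The compatibility condition follows from testing with $\psi\equiv1$ near the fibre, since $\mu_y$ is a probability measure. Conversely, fibrewise solutions satisfy \eqref{eq:poisson}. On each compact connected fibre the weighted Dirichlet form is coercive on mean-zero $H^1$ functions, by the Poincaré inequality. Lax--Milgram then gives a unique normalized solution $u_i(y,\cdot)$. Uniqueness in $L^2(U;H^1(M_y))$ follows because the difference of two solutions has zero fibre gradient a.e., hence is fibrewise constant, and the normalization forces that constant to be zero.

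\emph{Step 2: smoothness (main obstacle).} Since $H$ is a proper submersion on the star-shaped, hence contractible, set $U^+$, Ehresmann's theorem gives a diffeomorphism $E^+\cong U^+\times M$. Under it the fibre problems become a smooth family of uniformly elliptic operators $L_y$ on the fixed compact manifold $M$, with smooth weights and smooth sources $f_i(y)$. Elliptic regularity gives $u_i(y)\in C^\infty(M)$. Joint smoothness in $y$ follows from the implicit function theorem: $L_y$ is an isomorphism from mean-zero $H^{k+2}$ to mean-zero $H^k$, after correcting the mean constraint for the $y$-dependent measure by a smooth projection. Alternatively, one can differentiate difference quotients in $y$, with uniform Poincaré constants on compact subsets of $U^+$. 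Because $\overline U\Subset U^+$, all derivatives are bounded on $E$. Hence $v_i=n_i+\nabla_{M_y}u_i$ is smooth, and $\|v_i\|_{L^2(\mubar)}^2<\infty$ since $\mubar(E)=|U|<\infty$. This step carries most of the technical weight: we need uniform coercivity and the smooth dependence of the normalization.

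\emph{Step 3: projection.} Let $V$ be feasible. Then $W=V-v_i$ satisfies $\mathrm dH(W)=0$, so $W\in L^2(\mubar;\V)$, and $W$ obeys the same divergence condition because $v_i$ is feasible. Feasibility of $v_i$ is exactly \eqref{eq:poisson} rewritten as $\int_E\dif\varphi(n_i+\nabla_{M_y}u_i)\dif\mubar=0$, since for the vertical field $\nabla_{M_y}u_i$ we have $\dif\varphi(\nabla_{M_y}u_i)=g(\nabla_{M_y}u_i,\nabla_{M_y}\varphi)$. For the cross term we write
\[
\int_E g(v_i,W)\dif\mubar=\int_E g(\nabla_{M_y}u_i,W)\dif\mubar,
\]
using $n_i\perp\V$. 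It remains to show that this vanishes, i.e.\ that we may take $\varphi=u_i$ in the constraint on $W$. Since $u_i$ is smooth on $E^+$ and $\overline U\Subset U^+$, we approximate $u_i$ on $E$ by $\chi_k(H)\,u_i$, where the cutoffs $\chi_k\in C_c^\infty(U)$ increase to $1$. The error is $\int\partial\chi_k\cdot(\dots)$, and it is eliminated because $\dif(\chi_k(H))$ annihilates vertical $W$. Indeed, $\dif(\chi_k(H)u_i)(W)=\chi_k\,g(\nabla_{M_y}u_i,W)$, so dominated convergence applies. Hence the cross term is zero. Expanding $\|v_i+W\|^2$ then gives the stated identity, and the identity yields uniqueness of the minimizer.
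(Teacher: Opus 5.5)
Your proposal is correct and follows essentially the paper's route: fibrewise Lax--Milgram with Poincar\'e coercivity in fixed-fibre coordinates, smooth $y$-dependence from differentiating the inverse operator together with elliptic regularity, and the Pythagorean identity from the orthogonality of the vertical defect $V-v_i$ to $\nabla_{M_y}u_i$. The differences are cosmetic. You use Ehresmann's theorem instead of the explicit radial trivialization of \Cref{ass:local}, and the intrinsic source $\diver_{\mubar}(n_i)$ instead of the coordinate terms $\partial_i h_y$ and $\beta_i$. For the cross term you use cutoffs $\chi_k(H)u_i$, whose $H$-derivative is annihilated by verticality, in place of \Cref{lem:fibre-disintegration}; your uniqueness step, however, still tacitly needs that lemma's countable-dense-test argument to pass from product tests to testing with the difference itself.
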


The direction-wise kinetic benchmark is
\begin{equation}
 \Eloc=\frac12\sum_{i=s,t}\int_E|v_i|_g^2\dif\mubar.
 \label{eq:local-energy}
\end{equation}

Let $d_g$ be the ambient Riemannian distance on $(\X,g)$.
For probabilities $\nu_0,\nu_1$ with finite second moments, the quadratic
Wasserstein distance is
\begin{equation*}
 W_2(\nu_0,\nu_1)^2
 =\inf_{\pi\in\Pi(\nu_0,\nu_1)}
       \int_{\X\times\X}d_g(x,x')^2\dif\pi(x,x'),
\end{equation*}
where $\Pi(\nu_0,\nu_1)$ denotes the joint probability measures with
marginals $\nu_0,\nu_1$. In contrast with a two-endpoint problem, the
local kinetic problem keeps the entire family $\mu_y$ fixed. Its relation
to quadratic optimal transport is the following metric-speed identity.

\begin{proposition}[Identification with Wasserstein speed]
\label{prop:wasserstein-speed}
Under \Cref{ass:geometry}, for every $y\in U$ and $i\in\{s,t\}$,
\begin{equation*}
 \lim_{h\to0}\frac{W_2(\mu_{y+he_i},\mu_y)}{|h|}
 =\left(\int_{M_y}|v_i|_g^2\dif\mu_y\right)^{1/2},
\end{equation*}
where $h$ is small enough that the coordinate segment lies in $U$.
Each coordinate restriction is locally Lipschitz in $W_2$, and
\begin{equation}
\label{eq:wasserstein-dirichlet}
 \Eloc=\frac12\int_U\sum_{i=s,t}
 \left(\lim_{h\to0}\frac{W_2(\mu_{y+he_i},\mu_y)}{|h|}\right)^2\dif y.
\end{equation}
Thus $\Eloc$ is the Wasserstein Dirichlet energy of the prescribed family.
\end{proposition}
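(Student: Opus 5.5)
We prove the metric-speed identity by establishing matching upper and lower bounds on $W_2(\mu_{y+he_i},\mu_y)/|h|$. Throughout, $\kappa$ denotes the right-hand side $\|v_i\|_{L^2(\mu_y)}$. We fix $y\in U$ and $i$, and use \Cref{prop:local-ot}, which supplies a smooth field $v_i$ on $E^+$. This field satisfies $\mathrm dH(v_i)=e_i$ and the weak continuity equation. The first constraint makes the flow $\Phi_h$ of $v_i$ map fibres to fibres, $\Phi_h(M_y)=M_{y+he_i}$. Properness of $H$ makes the fibres over a compact segment compact, so the flow exists there.

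\textbf{Step 1 (pushforward).} We show $(\Phi_h)_\#\mu_y=\mu_{y+he_i}$. The weak constraint in \eqref{eq:local-minimum}, tested against $\varphi(x)=\psi(H(x))\chi(x)$ with $\psi$ localized in the parameter, gives the fibrewise statement
\[
\frac{d}{dh}\int\chi\,\mathrm d\mu_{y+he_i}=\int \mathrm d\chi(v_i)\,\mathrm d\mu_{y+he_i}.
\]
This uses the smoothness of the densities \eqref{eq:coarea} and of $v_i$. The curve $h\mapsto(\Phi_h)_\#\mu_y$ solves the same linear continuity equation with a smooth field, and it has the same initial datum. Uniqueness for smooth fields on the compact region then identifies the two curves.

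\textbf{Step 2 (upper bound).} The plan $(\mathrm{id},\Phi_h)_\#\mu_y$ is a coupling of $\mu_y$ and $\mu_{y+he_i}$. Bounding $d_g(x,\Phi_h x)\le\int_0^{|h|}|v_i(\Phi_\tau x)|\,\mathrm d\tau$ and applying Cauchy--Schwarz gives
\[
W_2^2\le |h|\int_0^{|h|}\int|v_i|^2\,\mathrm d\mu_{y+\tau e_i}\,\mathrm d\tau .
\]
Continuity of $\tau\mapsto\int|v_i|^2\,\mathrm d\mu_{y+\tau e_i}$ follows from smoothness and compactness. Hence $\limsup_{h\to0} W_2/|h|\le\kappa$. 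The same estimate, taken uniformly on compact subsegments, gives the local Lipschitz property.

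\textbf{Step 3 (lower bound, the main obstacle).} We use Kantorovich duality, or more simply the $1$-Lipschitz-in-gradient test. Take the smooth test function $\phi=u_i+\theta$, where $\theta$ is a smooth function on $E^+$ with ambient gradient $\nabla\theta=n_i$ near $M_y$. An exact choice is available locally as $\theta=\langle H, e_i\rangle$ corrected suitably; in general we approximate the gradient form $v_i$ by $\nabla\phi$ on $M_y$. For any coupling $\pi$ with $x\in M_y$ and $x'$ close to it, Taylor expansion gives
\[
\int\phi\,\mathrm d\mu_{y+he_i}-\int\phi\,\mathrm d\mu_y\le\|\nabla\phi\|_{L^2(\mu_y)}\,W_2+o(|h|)+O(W_2^2).
\]
Here we use $W_2\to0$ and compactness to control the second-order remainder. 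By Step 1 the left-hand side equals $h\int \mathrm d\phi(v_i)\,\mathrm d\mu_y+o(h)$. The lower bound then follows if $v_i$ is a gradient along $M_y$ in the right sense, i.e. $\int \mathrm d\phi(v_i)=\|v_i\|^2$ and $\|\nabla\phi\|=\|v_i\|$.

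This is the delicate point. The vertical part $\nabla_{M_y}u_i$ is a fibre gradient, but $n_i$ need not be an ambient gradient. The cleaner route I would actually use is to drop duality and work with the Benamou--Brenier or Ambrosio--Gigli--Savaré characterization \citep{AmbrosioGigliSavare2008}. The metric speed $|\dot\mu|$ equals the minimal $L^2(\mu)$ norm of a velocity solving the continuity equation along the curve. Step 1 shows $s\mapsto\mu_{y+se_i}$ is absolutely continuous, so its metric derivative exists a.e.; by continuity of the speeds it exists everywhere. Any velocity $w$ realizing the curve in $\X$ satisfies the continuity equation. Since $\mu_{y+se_i}$ is concentrated on $M_{y+se_i}$, the normal component of $w$ is forced to be $n_i$ in $L^2$. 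This follows by testing with functions of $H$, after first localizing in $s$ and transferring the time-slice equation to the occupation measure along the segment. The vertical component is then forced to be orthogonal to $v_i$, by the Pythagorean identity of \Cref{prop:local-ot} applied slice-wise. Hence $\|w\|\ge\kappa$. Establishing this slice-wise version of \Cref{prop:local-ot} from the fibrewise Poisson problem is the main technical step. It uses that \eqref{eq:poisson}, restricted to a.e. slice, is exactly the fibre Poisson equation, so its solution is smooth in the parameter.

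\textbf{Step 4.} Squaring the speed identity and integrating over $U$ via \eqref{eq:occupation-measure} and \eqref{eq:local-energy} yields \eqref{eq:wasserstein-dirichlet}.
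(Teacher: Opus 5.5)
\textbf{Verdict: there is a genuine gap in Step 3, in the route you finally choose.} Steps 1, 2 and 4 are sound. Step 1 gets the pushforward from uniqueness for the continuity equation, whereas the paper uses an explicit Jacobian identity in fibre coordinates; that difference is inessential.

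\textbf{What fails.} The Ambrosio--Gigli--Savar\'e characterization gives $|\dot\mu|(\tau)\geq\|w_\tau\|_{L^2}\geq\|v_i\|_{L^2(\mu_{y+\tau e_i})}$ only for almost every $\tau$ on each coordinate line. It therefore proves the speed identity only for a.e.\ $y$. Your step ``by continuity of the speeds it exists everywhere'' is not a valid inference. A.e.\ information on the metric derivative does control the $\limsup$ at a point, through $W_2\le\int|\dot\mu|$. It cannot control $\liminf_{h\to0}W_2(\mu_{y+he_i},\mu_y)/|h|$ at a prescribed point. For a counterexample, take $t\mapsto\delta_{(t,f(t))}$ in $P_2(\mathbb R^2)$, which is isometric to a planar curve. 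Let $f$ be a sawtooth with $|f'|=1$ a.e.\ and $|f(t)|\le t^2$. This curve has speed $\sqrt2$ a.e., a continuous function, yet its difference quotient tends to $1$ at $t=0$. So, as written, you obtain the local Lipschitz property and \eqref{eq:wasserstein-dirichlet}, with the limit understood a.e. You do not obtain the pointwise identity ``for every $y\in U$''. That identity needs a first-order lower bound on displacement at the fixed fibre, which speed information along the curve cannot supply.

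\textbf{How to fix it.} Use the duality route you abandoned; it is exactly the paper's argument, and the ``delicate point'' you raised there is not an obstruction. You never need $n_i$ to be a gradient field on $E$. The Taylor estimate uses $\nabla\varphi$ only on the single fibre $M_y$. Everything off $M_y$ is absorbed into the $C\,W_2^2$ term by a global Hessian bound for a compactly supported smooth $\varphi$.

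Any smooth normal field along a compact submanifold is, on that submanifold, the gradient of a function vanishing there. In normal tubular coordinates take $\varphi(\exp_x\nu)=u_i(x)+g(n_i(x),\nu)$ for $x\in M_y$ and $\nu\perp T_xM_y$, cut off away from $M_y$. Then $\nabla\varphi=\nabla_{M_y}u_i+n_i=v_i$ exactly on $M_y$. Alternatively, $\theta=\langle(\mathrm dH(\mathrm dH)^*)^{-1}e_i,\,H-y\rangle$ already has gradient $n_i$ on $M_y$, so $\theta=\langle H,e_i\rangle$ ``corrected suitably'' needs no approximation.

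With $a=\|v_i\|_{L^2(\mu_y)}$, Taylor expansion along minimizing geodesics gives
$|\int\varphi\,\mathrm d\mu_{y+he_i}-\int\varphi\,\mathrm d\mu_y|\le aW_2+CW_2^2$.
The flow identity from Step 1 makes the left side equal to $|h|a^2+O(h^2)$. Combined with $W_2=O(|h|)$ from Step 2, this yields $\liminf_{h\to0}W_2/|h|\ge a$ at every $y$ when $a>0$; the case $a=0$ is covered by Step 2. This route also avoids importing the continuity-equation characterization on a Riemannian manifold and the slice-wise projection argument.
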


Thus the local problem gives the infinitesimal quadratic transport cost
along each coordinate line. A finite coordinate flow follows the
prescribed intermediate measures, which need not form a Wasserstein
geodesic.

To couple all $\mu_y$ at once, let one particle follow a section
$\sigma:U\to\X$ with $H(\sigma(y))=y$. A law $Q$ on sections is a
transport plan with infinitely many marginals. Sobolev sections allow
limits with bounded action.
Properness makes $K=H^{-1}(\overline U)$ compact.
Fix a smooth Euclidean embedding of a neighbourhood of $K$ into $\R^N$
\citep[Chapter~6]{Lee2012} and identify $K$ with its image.
Here $H^1(U;\R^N)$ consists of square-integrable maps whose first weak
derivatives are square integrable, and $\operatorname{id}_U$ is the
identity map on $U$. Define
\begin{equation}
 \mathcal S=\{\sigma\in H^1(U;\R^N):
 \sigma(y)\in K\ \text{a.e.},\quad
 H\circ\sigma=\operatorname{id}_U\ \text{a.e.}\}.
 \label{eq:section-space}
\end{equation}
Equip $\mathcal S$ with the strong $H^1$ Borel structure. Its elements
are almost-everywhere classes, denoted by $[\sigma]$ when this distinction
is needed, so we impose marginals in integrated form.
\Cref{lem:representatives} supplies jointly Borel representatives.
Weak derivatives are denoted by $\partial_i\sigma$; their norms use $g$.

\begin{definition}[Admissible coupling and coherent action]
\label{def:coupling}
The set $\Qcoh$ consists of the Borel probabilities $Q$ on $\mathcal S$
with finite action
\begin{equation}
 \Aact(Q)=\frac12\int_{\mathcal S\times U}
 \bigl(|\partial_s\sigma(y)|_g^2+|\partial_t\sigma(y)|_g^2\bigr)
 \dif Q(\sigma)\dif y<\infty
 \label{eq:action}
\end{equation}
and the occupation constraint
\begin{equation}
 \int_{\mathcal S\times U}f(y,\sigma(y))\dif Q(\sigma)\dif y
 =\int_U\int_{M_y}f(y,x)\dif\mu_y(x)\dif y
 \label{eq:occupation}
\end{equation}
for every bounded Borel $f:U\times E\to\R$. Set
\begin{equation}
 \Dcoh=\inf_{Q\in\Qcoh}\Aact(Q)-\Eloc.
 \label{eq:gap}
\end{equation}
\end{definition}

Disintegration gives marginal $\mu_y$ for almost every $y$.
A generator with measurable $H^1$ sections, these marginals, and finite
action induces such a law; changing its seed parametrization need not
change $Q$.
For $Q\in\Qcoh$, its velocity residuals are the vertical fields on the
product space
\begin{equation}
 r_i(\sigma,y)=\partial_i\sigma(y)-v_i(\sigma(y)),\qquad i=s,t.
 \label{eq:residual}
\end{equation}

\begin{proposition}[Feasibility and exact residual identity]
\label{prop:residual}
Under \Cref{ass:geometry}, the set
$\Qcoh$ is nonempty, $0\leq\Dcoh<\infty$, and every $Q\in\Qcoh$ satisfies
\begin{equation}
 \Aact(Q)-\Eloc
 =\frac12\int_{\mathcal S\times U}(|r_s|_g^2+|r_t|_g^2)
 \dif Q\dif y.
 \label{eq:residual-identity}
\end{equation}
\end{proposition}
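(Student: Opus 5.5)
We prove the three assertions in order: the residual identity, then $\Dcoh\ge0$, then feasibility (which gives $\Dcoh<\infty$).

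\textbf{Residual identity.} Fix $Q\in\Qcoh$ and expand
\[
 |\partial_i\sigma|_g^2=|v_i(\sigma)|_g^2+2g\bigl(v_i(\sigma),r_i\bigr)+|r_i|_g^2 .
\]
Since $\Aact(Q)<\infty$, every term is integrable. By the occupation constraint \eqref{eq:occupation}, the first term integrates to $2\Eloc$ after summing over $i$. It therefore suffices to show that
\[
 \int_{\mathcal S\times U} g\bigl(v_i(\sigma(y)),r_i(\sigma,y)\bigr)\dif Q\dif y=0 .
\]

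\emph{The residual is vertical.} For a.e.\ $y$ we have $H\circ\sigma=\operatorname{id}_U$. The chain rule for $H^1$ maps composed with the smooth $H$ (extended near $K$) gives $\mathrm dH(\partial_i\sigma)=e_i$. Hence $r_i\in\V$.

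\emph{Reduction to the Poisson equation.} Write $v_i=n_i+\nabla_{M_y}u_i$. The normal part $n_i$ is $g$-orthogonal to $\V$, so $g(v_i,r_i)=g(\nabla_{M_y}u_i,r_i)$. For vertical $r_i$ this equals
\[
 \mathrm du_i(r_i)=\mathrm du_i(\partial_i\sigma)-\mathrm du_i(v_i).
\]

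\emph{Integrating each piece.} Approximate $u_i$ by smooth compactly supported functions $\varphi$ on $E^+$; this is possible since $u_i$ is smooth on $E^+\supset K$.
\begin{enumerate}
\item For the first piece, the chain rule gives $\mathrm d\varphi(\partial_i\sigma)=\partial_i(\varphi\circ\sigma)$. We then need
\[
 \int_{\mathcal S\times U}\partial_i(\varphi\circ\sigma)\dif Q\dif y=\int_U\partial_i\Phi(y)\dif y,\qquad \Phi(y)=\int_{M_y}\varphi\dif\mu_y .
\]
This holds in the distributional sense: test against $\chi\in C_c^\infty(U)$, integrate by parts in $y$ for each $\sigma$, then apply \eqref{eq:occupation} with $f=\partial_i\chi\,\varphi$.
\item For the second piece, $\int\mathrm d\varphi(v_i)\dif\mubar$ is given by the continuity-equation constraint of \eqref{eq:local-minimum}, now localized with $\chi(y)$. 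It yields the same quantity $-\int\partial_i\chi\,\Phi\dif y$.
\end{enumerate}
So the two pieces cancel. The only subtlety is that $u_i$ is not compactly supported in $U$, since the fibres reach $\partial U$.

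\emph{Main obstacle.} To handle the boundary, I would first prove the identity with the cutoff $\chi(y)$ inserted, i.e.\ $\int\chi\,g(v_i,r_i)=0$ for all $\chi\in C_c^\infty(U)$. Then let $\chi\uparrow1$, using dominated convergence with the integrable bound $|v_i||r_i|$. The localized cancellation is exactly the statement that the weak Leibniz formula for $y\mapsto\int\varphi\dif\mu_y$ holds both along $Q$-sections and along the flow of $v_i$. The latter follows because $v_i$ is feasible in \eqref{eq:local-minimum} with test functions $\chi(H)\varphi$, where $\mathrm d(\chi\circ H)(v_i)=\partial_i\chi$.

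\textbf{Nonnegativity.} Given the identity, $\Aact(Q)\ge\Eloc$ for every $Q\in\Qcoh$, so $\Dcoh\ge0$.

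\textbf{Feasibility.} Use the smooth flows of $v_s$ and $v_t$, which are defined and complete on $E^+$ by properness. For $y\in U$, define
\[
 \Phi_y=\Fl^{v_t}_{t-t_0}\circ\Fl^{v_s}_{s-s_0}
\]
from a base fibre $M_{y_0}$: flow in $s$ first, then in $t$. This requires the coordinate paths to stay in $U^+$; the star-shaped assumption alone does not give this, so I would instead flow radially along the field $s'v_s+t'v_t$ from a star centre $y_0$. That field is feasible for direction $y-y_0$ and keeps its path in $U$.

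The continuity equations show that $\Phi_y$ pushes $\mu_{y_0}$ to $\mu_y$. Take $Q$ to be the law of $\sigma_x(y)=\Phi_y(x)$ with $x\sim\mu_{y_0}$. The map $y\mapsto\Phi_y$ is smooth away from $y_0$. The derivative of the radial flow with respect to $y$ is bounded by $C(1+|\log|y-y_0||)$-type factors at worst, and is in fact bounded, since the flow time is $1$ with a field depending smoothly on the direction. Hence $\sigma_x\in H^1$ with uniformly bounded energy on the compact set $K$. Measurability of $x\mapsto\sigma_x\in\mathcal S$ follows from continuity. Therefore $\Aact(Q)<\infty$, $Q\in\Qcoh$, and $\Dcoh<\infty$.
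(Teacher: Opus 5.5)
Your proof is correct. For feasibility it is the paper's construction: running the radial field $W_y=(s-s_0)v_s+(t-t_0)v_t$ for unit time is exactly \Cref{lem:radial-comparison}. Your hedge about logarithmic factors can be dropped. Since $W_y$ depends linearly on $y$, the map $(y,z)\mapsto\Fl_{W_y}^1(z)$ is smooth also at $y_0$. Gronwall applied to the variational equation, whose coefficients are bounded on $K$, then bounds the $y$-derivatives uniformly on $U$.

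For the residual identity you take a different route from the paper. The paper builds the barycentric flux $b_i^Q$ by Riesz representation (\Cref{lem:barycentric-flux}) and shows it is feasible for \eqref{eq:local-minimum}. It then invokes the projection identity of \Cref{prop:local-ot}, whose proof tests the fibre disintegration of \Cref{lem:fibre-disintegration} with $u_i$. This turns the cross term into $\int_E g(v_i,b_i^Q-v_i)\dif\mubar=0$.

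You instead compute the cross term directly. By verticality, $g(v_i,r_i)=\mathrm du_i(r_i)$. After inserting $\chi(y)$, two quantities both equal $-\int_U\partial_i\chi(y)\int_{M_y}u_i\dif\mu_y\dif y$:
\begin{itemize}
\item $\int\chi\,\mathrm du_i(\partial_i\sigma)\dif Q\dif y$, by the weak derivative along each section together with occupation;
\item $\int\chi\,\mathrm du_i(v_i)\dif Q\dif y$, by the continuity constraint for $v_i$ tested with $(\chi\circ H)u_i$.
\end{itemize}
Each vanishes on its own by the normalization in \eqref{eq:poisson}, and dominated convergence then removes $\chi$.

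Your route is more elementary. It needs no Riesz representation and no fibre disintegration, and the cutoff handles the fact that $u_i$ is not compactly supported in $E$. The paper's route instead reuses the local projection already proved. It also makes the Eulerian--Lagrangian comparison explicit: the mean velocity of any admissible coupling is itself a competitor in the local problem.
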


The identity follows from the local projection in \Cref{prop:local-ot}: the average velocity
induced by $Q$ is feasible, so its correction is orthogonal to $v_i$.
The excess action is therefore the cost of realizing both directions
by one law.

\begin{proposition}[Existence of an optimal coherent coupling]
\label{prop:coherent-attainment}
Under \Cref{ass:geometry}, there exists $Q_*\in\Qcoh$ such that
\begin{equation*}
 \Aact(Q_*)=\inf_{Q\in\Qcoh}\Aact(Q)=\Eloc+\Dcoh.
\end{equation*}
If $(Q_n)$ is a minimizing sequence, the laws obtained by regarding its
sections as $L^2(U;\R^N)$ classes have a narrowly convergent subsequence
on the strong $L^2$ space;
the limiting law corresponds to an optimal Borel law on $\mathcal S$.
\end{proposition}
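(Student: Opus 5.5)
The argument is the direct method, run on the pushforward laws in $L^2(U;\R^N)$ rather than on $\mathcal S$ itself. By \Cref{prop:residual}, $\Qcoh$ is nonempty and the infimum is finite, so a minimizing sequence $(Q_n)\subset\Qcoh$ with $\Aact(Q_n)\to\Eloc+\Dcoh$ exists.

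\textbf{Tightness.} Let $\iota:\mathcal S\to L^2(U;\R^N)$ send a section to its class, and set $P_n=\iota_\#Q_n$. I would prove tightness with the coercive functional
\[
\Phi(\sigma)=\|\sigma\|_{H^1(U;\R^N)}^2 ,
\]
extended by $+\infty$ off $H^1$. Its sublevel sets are compact in $L^2$ by Rellich, since $U$ is a bounded Lipschitz domain. To see that $\int\Phi\,\dif P_n$ is bounded, note that $\sigma(y)\in K$ a.e. and $K$ is compact, so $\|\sigma\|_{L^2}$ is uniformly bounded. On $K$ the embedded Euclidean norm of a tangent vector is comparable to its $g$-norm, so $\int\|\nabla\sigma\|_{L^2}^2\,\dif Q_n\le C\,\Aact(Q_n)$. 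Prokhorov then gives a subsequence with $P_n\to P$ narrowly on the Polish space $L^2(U;\R^N)$.

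\textbf{Support and constraints.} First, $P$ is concentrated on $H^1$ maps with $\int\Phi\,\dif P<\infty$, because $\Phi$ is lower semicontinuous on $L^2$ and narrow limits respect lsc integrands bounded below. The conditions $\sigma(y)\in K$ a.e. and $H\circ\sigma=\mathrm{id}_U$ a.e. define closed subsets of $L^2$: an $L^2$ limit has an a.e.-convergent subsequence, $K$ is closed, and $H$ is continuous. Hence $P$ is carried by $\iota(\mathcal S)$. On $\mathcal S$ the $L^2$ and $H^1$ Borel structures agree (a Lusin--Souslin/Kuratowski argument, since $\iota$ is an injective Borel map between Polish-type spaces), so $P$ defines a Borel law $Q_*$ on $\mathcal S$.

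For \eqref{eq:occupation}, first take $f$ bounded and continuous on $U\times E$ (extend to $\R^N$ near $K$). Then $\sigma\mapsto\int_U f(y,\sigma(y))\,\dif y$ is bounded and continuous on $L^2$ by dominated convergence along a.e. subsequences, so the identity passes to the limit. Monotone class arguments extend it to bounded Borel $f$.

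\textbf{Lower semicontinuity of the action (main obstacle).} The map $\sigma\mapsto\frac12\int_U\bigl(|\partial_s\sigma|_g^2+|\partial_t\sigma|_g^2\bigr)\dif y$, extended by $+\infty$, must be lsc on $L^2$. Since $|w|_g^2=G(\sigma)(w,w)$ with $G$ a continuous positive-definite quadratic form field on a neighbourhood of $K$ (extend $g$ smoothly), this is a classical convex-in-gradient integrand, and Ioffe's theorem gives lsc under $\sigma_n\to\sigma$ in $L^2$ with $\nabla\sigma_n\rightharpoonup\nabla\sigma$. The weak convergence of gradients follows along sublevel sets from the $H^1$ bound.

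Because the functional is lsc and bounded below, Portmanteau gives
\[
\Aact(Q_*)\le\liminf_n\Aact(Q_n)=\Eloc+\Dcoh.
\]
Thus $Q_*\in\Qcoh$ attains the infimum.

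The delicate parts are, first, the Borel identification of the limit law on $\mathcal S$ (including compatibility with \Cref{lem:representatives}) and, second, checking that the integrand's state-dependence through $g$ is handled by Ioffe's theorem, or alternatively by approximating $g$ from below with constant-coefficient forms on small patches.
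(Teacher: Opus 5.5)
Your proposal is correct and follows the paper's proof essentially step for step. Both arguments use the direct method on the $L^2(U;\R^N)$ pushforward laws: tightness from Rellich-compact sublevels and Prokhorov, Portmanteau to control both the support and the action, Lusin--Souslin for the Borel identification on $\mathcal S$, and bounded continuous integral functionals to preserve the occupation constraint. The differences are cosmetic: you cite Ioffe's theorem where the paper shows directly that $\mathbf G(u_n)^{1/2}\partial_i u_n\rightharpoonup\mathbf G(u)^{1/2}\partial_i u$ in $L^2$, and you test occupation against general bounded continuous $f$ plus a monotone class argument, whereas the paper uses products $\eta(y)\varphi(u(y))$, which are dense in $C_0(U\times K)$.
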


Here narrow convergence means convergence of integrals against all
bounded continuous functions on the stated $L^2$ space.
This is the compact-fibre version of the lifting attainment argument
in Lavenant~\citep[Proposition~3.13]{LavenantLifting2024}. We give a self-contained
proof using $L^2$ compactness of bounded-action sections and closure of
the integrated marginal constraint. The curvature estimate in \Cref{thm:certificate}
bounds the value of this minimization problem.

\subsection{Curvature and the coherence gap}
\label{subsec:curvature-certificate}

The two local velocities need not commute. Let $[V,W]$ denote the Lie
bracket of vector fields. The Levi-Civita connection $\nabla$ of $g$
is torsion-free, so $[V,W]=\nabla_VW-\nabla_WV$.
We define the transport curvature by
\begin{equation}
 \Omega=[v_s,v_t].
 \label{eq:curvature}
\end{equation}
It measures the failure of the two transport directions to commute and
is vertical since $\mathrm dH(v_i)=e_i$ is constant.
To estimate the gap, use a compactly supported $C^1$ section
$\alpha\in C_c^1(E;\V^*)$ of the dual vertical bundle:
$\alpha_x$ is a linear functional on $\V_x$.
Extend it to a one-form on $E$ that annihilates $v_s,v_t$:
\begin{equation*}
 \widetilde\alpha_x(W)=\alpha_x\bigl(W-(\mathrm dH_x(W))_s v_s(x)
                                      -(\mathrm dH_x(W))_t v_t(x)\bigr).
\end{equation*}
Here $\dif\widetilde\alpha$ is the exterior derivative, a two-form.
The bound uses the curvature integral and its mixed and vertical components:
\begin{align}
 I_\alpha&=\left|\int_E\alpha(\Omega)\dif\mubar\right|,
 &B_\alpha^2&=\int_E\sum_{i=s,t}
       |\dif\widetilde\alpha(v_i,\cdot)|_{\V^*}^2\dif\mubar,
 \nonumber\\
 A_\alpha&=\frac12\|\dif\widetilde\alpha|_{\V\times\V}
                          \|_{L^\infty(\mathrm{comass})}.
 \label{eq:witness-terms}
\end{align}
Here $B_\alpha\geq0$; comass means the supremum on pairs of vertical
vectors of norm at most one, and essential suprema use $\mubar$.

\begin{theorem}[A curvature lower bound for the action gap]
\label{thm:certificate}
Under \Cref{ass:geometry}, every $Q\in\Qcoh$ and
$\alpha\in C_c^1(E;\V^*)$ satisfy
\begin{equation}
 I_\alpha\leq B_\alpha\sqrt{2(\Aact(Q)-\Eloc)}
                    +2A_\alpha(\Aact(Q)-\Eloc).
 \label{eq:certificate}
\end{equation}
Consequently,
\begin{equation}
 \Dcoh\geq
 \sup_{\substack{\alpha\in C_c^1(E;\V^*)\\ I_\alpha>0}}
 \frac{2I_\alpha^2}
 {(B_\alpha+\sqrt{B_\alpha^2+4A_\alpha I_\alpha})^2}.
 \label{eq:certificate-lower}
\end{equation}
The supremum is taken to be zero when there is no such test form.
For $I_\alpha>0$ the denominator is positive, so one such form implies
$\Dcoh>0$.
\end{theorem}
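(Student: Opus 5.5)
The proof runs a weak Stokes argument on each section, integrates against $Q$, and then uses the residual identity of \Cref{prop:residual}. Fix $Q\in\Qcoh$ and $\alpha\in C_c^1(E;\V^*)$. For a section $\sigma\in\mathcal S$, consider the pulled-back one-form $\sigma^*\widetilde\alpha$ on $U$. Since $\widetilde\alpha$ is compactly supported in $E$, the integral $\int_U\sigma^*\dif\widetilde\alpha$ vanishes.

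The first step is to justify this Stokes identity for Sobolev sections. For smooth $\sigma$ it holds because $\sigma^*\widetilde\alpha$ has compact support in $U$ whenever $H\circ\sigma=\operatorname{id}_U$. For $\sigma\in\mathcal S$ I approximate in $H^1$ by smooth maps; the identity for $H^1$ maps then follows from the fact that $\dif(\sigma^*\beta)=\sigma^*\dif\beta$ in distributions for $C^1$ forms $\beta$. This is a standard determinant-of-gradients argument, since the $2$-form pullback is quadratic in $\nabla\sigma$ and hence $L^1$-continuous in $H^1$.

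The second step expands $\sigma^*\dif\widetilde\alpha=\dif\widetilde\alpha(\partial_s\sigma,\partial_t\sigma)\dif s\dif t$ with $\partial_i\sigma=v_i+r_i$, evaluated at $\sigma(y)$:
\[
0=\int_U\Bigl[\dif\widetilde\alpha(v_s,v_t)+\dif\widetilde\alpha(v_s,r_t)-\dif\widetilde\alpha(v_t,r_s)+\dif\widetilde\alpha(r_s,r_t)\Bigr]\dif y .
\]
Because $\widetilde\alpha$ annihilates $v_s,v_t$, the formula $\dif\widetilde\alpha(v_s,v_t)=v_s\widetilde\alpha(v_t)-v_t\widetilde\alpha(v_s)-\widetilde\alpha([v_s,v_t])$ gives $\dif\widetilde\alpha(v_s,v_t)=-\alpha(\Omega)$, using that $\Omega$ is vertical.

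Next I integrate in $Q$. The occupation constraint \eqref{eq:occupation} turns $\int\alpha(\Omega(\sigma(y)))\dif Q\dif y$ into $\int_E\alpha(\Omega)\dif\mubar$. To apply it to this unbounded-looking integrand, note that $\alpha(\Omega)$ is bounded and Borel, and that jointly measurable representatives come from \Cref{lem:representatives}. The identity becomes
\[
\int_E\alpha(\Omega)\dif\mubar=\E_Q\int_U\Bigl[\dif\widetilde\alpha(v_s,r_t)-\dif\widetilde\alpha(v_t,r_s)+\dif\widetilde\alpha(r_s,r_t)\Bigr]\dif y .
\]
Since the residuals are vertical, the mixed terms are bounded by $|\dif\widetilde\alpha(v_i,\cdot)|_{\V^*}|r_j|$. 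Cauchy--Schwarz on the product space, with occupation converting the $v_i$ factors into $\mubar$-integrals, then gives
\[
\text{mixed terms}\le B_\alpha\Bigl(\E_Q\int(|r_s|^2+|r_t|^2)\Bigr)^{1/2}=B_\alpha\sqrt{2(\Aact-\Eloc)}
\]
by \eqref{eq:residual-identity}. Here I use $(a_s\|r_t\|+a_t\|r_s\|)\le(a_s^2+a_t^2)^{1/2}(\|r_s\|^2+\|r_t\|^2)^{1/2}$. The quadratic term is at most $2A_\alpha|r_s||r_t|\le A_\alpha(|r_s|^2+|r_t|^2)$, whose integral is $2A_\alpha(\Aact-\Eloc)$. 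This yields \eqref{eq:certificate}.

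For the consequence, set $x=\sqrt{2(\Aact-\Eloc)}$, so that \eqref{eq:certificate} reads $A_\alpha x^2+B_\alpha x-I_\alpha\ge0$. This forces
\[
x\ge\frac{2I_\alpha}{B_\alpha+\sqrt{B_\alpha^2+4A_\alpha I_\alpha}},
\]
which is the rationalized root and remains valid when $A_\alpha=0$. Squaring and halving, then taking the infimum over $Q$ (or using $Q_*$ from \Cref{prop:coherent-attainment}), gives \eqref{eq:certificate-lower}. When $I_\alpha>0$ the denominator is at least $\sqrt{4A_\alpha I_\alpha}$ or $2B_\alpha$. If both vanished the inequality would give $I_\alpha\le0$, a contradiction, so the denominator is positive and $\Dcoh>0$.

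The main obstacle is the weak Stokes step: making $\int_U\sigma^*\dif\widetilde\alpha=0$ rigorous for $H^1$ sections with values in $K$. I would first mollify $\sigma$ in $\R^N$ and project to $K$ via a tubular neighbourhood retraction. If that is delicate near $\partial U$, I would instead extend $\widetilde\alpha$ smoothly to $\R^N$ with compact support, where the identity $\int\dif(\sigma^*\beta)=0$ for compactly supported pullbacks needs no projection. The support claim holds because $\alpha$ vanishes when $H(\sigma(y))=y$ lies near $\partial U$.
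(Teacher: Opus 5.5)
Your proof is correct and follows essentially the same route as the paper. You prove a weak Stokes identity $\int_U\sigma^*(\dif\widetilde\alpha)=0$ on each Sobolev section, via the distributional identity $\dif(\sigma^*\theta)=\sigma^*\dif\theta$ from smooth ambient approximation plus the compact support of $\sigma^*\widetilde\alpha$ forced by $H\circ\sigma=\operatorname{id}_U$. You then use the splitting $\partial_i\sigma=v_i+r_i$ with $\dif\widetilde\alpha(v_s,v_t)=-\alpha(\Omega)$, average over $Q$ using occupation, and finish with Cauchy--Schwarz, the residual identity, and the rationalized root of $A_\alpha x^2+B_\alpha x\ge I_\alpha$. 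Two small wording points: the ambient extension of $\widetilde\alpha$ is only $C^1$, not smooth, since $\alpha\in C_c^1$. The compact-support argument applies to $\sigma$ itself rather than to the approximating maps, which is how your first step correctly uses it.
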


The mechanism of the bound is an integration-by-parts identity on each
Sobolev particle surface. Stokes' formula pairs curvature with terms
linear and quadratic in the velocity residuals; compact support removes
boundary terms. Averaging over the law and applying Cauchy--Schwarz
then bounds these terms by the excess action in
\eqref{eq:residual-identity}. Applying the identity before averaging is
what permits arbitrary Sobolev coupling laws
(\Cref{subsec:proof-stokes}).

For a parametrized map $F$, write $F_y=F(y,\cdot)$.
For a measurable map $\Psi$ and a measure $\nu$, the pushforward
$\Psi_\#\nu$ is defined by $(\Psi_\#\nu)(B)=\nu(\Psi^{-1}(B))$
for every measurable set $B$ in the target space.

\begin{proposition}[Flatness and the zero-gap coupling]
\label{thm:flatness}
Under \Cref{ass:geometry},
\begin{equation*}
 \Dcoh=0\quad\Longleftrightarrow\quad\Omega\equiv0\text{ on }E.
\end{equation*}
When these conditions hold, every star centre $y_\star\in U$ determines a
unique smooth map $F:U\times M_{y_\star}\to E$ satisfying
\begin{equation*}
 F(y_\star,z)=z,\qquad H(F(y,z))=y,\qquad
 \partial_iF(y,z)=v_i(F(y,z))\quad(i=s,t).
\end{equation*}
It has a smooth $E^+$-valued extension near
$\overline U\times M_{y_\star}$, with bounded first derivatives on this
compact set.
Each $F_y$ is a diffeomorphism onto $M_y$ and
$(F_y)_\#\mu_{y_\star}=\mu_y$. The probability
\begin{equation*}
 Q_F=\bigl(z\mapsto [F(\cdot,z)]\bigr)_\#\mu_{y_\star}
\end{equation*}
is the unique law in $\Qcoh$ attaining the infimum, and
$\Aact(Q_F)=\Eloc$. For every piecewise $C^1$ path $\gamma$ in $U$,
horizontal transport has endpoint map
$F_{\gamma(1)}\circ F_{\gamma(0)}^{-1}$.
\end{proposition}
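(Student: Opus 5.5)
We prove the two implications separately, and obtain uniqueness of the optimizer from the residual identity.

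\emph{Necessity} ($\Dcoh=0\Rightarrow\Omega\equiv0$). Suppose $\Omega(x_0)\neq0$ for some $x_0\in E$. Choose $\alpha\in C_c^1(E;\V^*)$ supported in a small neighbourhood of $x_0$ with $\alpha(\Omega)\geq0$ and $\alpha(\Omega)(x_0)>0$; for instance, take a bump function times the vertical one-form dual to $\Omega$ under $g$. Then $I_\alpha>0$, and \Cref{thm:certificate} gives $\Dcoh>0$, which proves the contrapositive.

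\emph{Sufficiency and construction of $F$} ($\Omega\equiv0\Rightarrow\Dcoh=0$). The fields $v_s,v_t$ are smooth on $E^+$ by \Cref{prop:local-ot} and commute. Since $\mathrm dH(v_i)=e_i$, their flows move fibres rigidly: $\Fl^{v_i}_h(M_y)=M_{y+he_i}$. Properness and compactness of the fibres guarantee that these flows exist as long as the base point stays in $U^+$.

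Fix a star centre $y_\star$ and define
\[
F(y,z)=\Fl^{v_s}_{s-s_\star}\circ\Fl^{v_t}_{t-t_\star}(z).
\]
Commutation of the flows lets us reorder the composition freely, so $\partial_sF=v_s\circ F$ and $\partial_tF=v_t\circ F$ on the domain where the flows are defined. One point needs care: the coordinate path from $y_\star$ to $y$ may leave $U$. We handle this either by using $U^+$ or, more safely, by defining $F$ along the radial segment $\tau\mapsto y_\star+\tau(y-y_\star)$, which stays in $U$ because $U$ is star-shaped. Flatness (Frobenius) then shows that the radial definition agrees with the ODE system.

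\emph{Uniqueness of $F$.} Any map satisfying the system also satisfies, along each radial segment, the ODE $\tfrac{d}{d\tau}F=(y-y_\star)^iv_i(F)$ with initial value $z$. ODE uniqueness therefore gives uniqueness of $F$.

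\emph{Regularity and the extension.} Smooth dependence on initial data gives smoothness of $F$ and a smooth $E^+$-valued extension near $\overline U\times M_{y_\star}$ (since $\overline U\Subset U^+$). Bounded derivatives follow from compactness of $\overline U\times M_{y_\star}$. Each $F_y$ is a diffeomorphism onto $M_y$, with inverse given by the reverse flow.

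\emph{Marginals and action.} The pushforward identity $(F_y)_\#\mu_{y_\star}=\mu_y$ follows because, along the radial path, the velocity $(y-y_\star)^iv_i$ satisfies the weak continuity equation for $\mu_y$. This is the second constraint in \eqref{eq:local-minimum}. Both $(F_y)_\#\mu_{y_\star}$ and $\mu_y$ then solve the same transport equation with the same initial datum and a smooth Lipschitz velocity field, so uniqueness for such equations gives equality. I expect this step to be the main obstacle: the continuity equation is only stated in integrated form over $E$, and we must convert it to the fibrewise form along paths. This requires testing \eqref{eq:local-minimum} with products $\psi(H)\varphi$ to localize in $y$.

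With the marginals established, $Q_F$ lies in $\Qcoh$; measurability of $z\mapsto[F(\cdot,z)]$ holds by continuity into $H^1$. Its residuals vanish identically, so \eqref{eq:residual-identity} gives $\Aact(Q_F)=\Eloc$ and hence $\Dcoh=0$.

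\emph{Uniqueness of the optimal law.} Let $Q\in\Qcoh$ attain $\Eloc$. By \eqref{eq:residual-identity}, $r_s=r_t=0$ for $Q\otimes\dif y$-a.e. $(\sigma,y)$. So $Q$-a.e. section is a Sobolev solution of $\partial_i\sigma=v_i(\sigma)$. Such a $\sigma$ has derivatives in $L^\infty$, so it is Lipschitz and in fact $C^1$ after bootstrapping. An ODE-uniqueness argument along radial segments from $y_\star$ then gives $\sigma=F(\cdot,\sigma(y_\star))$; here $\sigma(y_\star)$ is well defined via the continuous representative.

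The occupation constraint forces the law of $\sigma(y)$ to be $\mu_y$ for a.e. $y$. By continuity this passes to $y_\star$, so $\sigma(y_\star)\sim\mu_{y_\star}$, and therefore $Q=Q_F$.

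\emph{Holonomy statement.} For a piecewise $C^1$ path $\gamma$, the horizontal lift $x(\tau)$ solves $\dot x=\dot\gamma^iv_i(x)$. Its solution is $F(\gamma(\tau),F_{\gamma(0)}^{-1}(x_0))$, again by ODE uniqueness, and this gives the endpoint map $F_{\gamma(1)}\circ F_{\gamma(0)}^{-1}$.
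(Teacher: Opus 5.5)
Your proposal is correct. Its skeleton matches the paper: the curvature bound for necessity, the radial flow for the construction, and ODE uniqueness for $F$ and for holonomy. The differences lie in how three steps are justified.

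\textbf{The identity $\partial_iF=v_i(F)$.} This is the core of sufficiency, and here you only invoke ``Frobenius''. The claim is true, for instance via the leaf of the involutive distribution $\operatorname{span}(v_s,v_t)$, on which $H$ is a local diffeomorphism. Note that falling back on $U^+$ does not rescue the coordinate-flow definition, since the L-shaped path can leave $U^+$ too. The paper gives a direct computation instead. For $X_\tau=\Fl^\tau_{W_y}(z)$ with $W_y=(y-y_\star)^iv_i$, the derivative $Y_i=\partial_{y_i}X_\tau$ solves $\dot Y_i=\mathrm dW_y(X_\tau)Y_i+v_i(X_\tau)$ with $Y_i(0)=0$. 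Since $[W_y,v_i]=0$, the field $\tau v_i(X_\tau)$ solves the same linear problem, so $Y_i(1)=v_i(F)$.

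\textbf{The marginals.} The step you flag as the main obstacle is not one. The potential $u_i$ is a smooth classical solution, so $\partial_ih_y+\diver_{g_0}(h_yw_i)=0$ holds pointwise. The paper simply conserves $h_{y_\star+\tau a}(Z_\tau)J_\tau$ along the radial flow, which uses no flatness at all. Your appeal to uniqueness for Lipschitz continuity equations is an equally valid alternative.

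\textbf{Uniqueness of the optimal law.} You bootstrap zero-residual sections to $W^{1,\infty}$ and then to $C^1$, and integrate along rays. The paper instead uses the first integral $\Lambda(x)=F_{H(x)}^{-1}(x)$. Since $\mathrm d\Lambda(v_i)=0$, the Sobolev chain rule gives $\partial_i(\Lambda\circ\sigma)=0$, so the label is constant with no regularity upgrade. The law then lives on the compact image of the continuous injection $z\mapsto[F(\cdot,z)]$, which also settles measurability. In your version you should add two points:
\begin{enumerate}
\item The map $\sigma\mapsto\sigma(y_\star)$ is Borel, for example as a limit of ball averages.
\item The equality $\nu=\mu_{y_\star}$ already follows at a single $y$, because $F_y$ is bijective and $(F_y)_\#\nu=\mu_y=(F_y)_\#\mu_{y_\star}$.
\end{enumerate}

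\textbf{Necessity.} Your conclusion $I_\alpha>0$ tacitly uses that $\mubar$ has full support on $E$.
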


Flatness adds smoothness and uniqueness to the general existence result in \Cref{prop:coherent-attainment}.
Uniqueness concerns the law, not a seed parametrization.

For $k$ controls, replace $\R^2$ by $\R^k$ in
\Cref{ass:geometry}. The local minimizers $v_1,\ldots,v_k$
give pairwise curvatures $\Omega_{ij}=[v_i,v_j]$, $i<j$.
A smooth family of fibre diffeomorphisms realizing all $v_i$ forces
every bracket to vanish by equality of mixed derivatives. Conversely,
vanishing of all brackets makes the radial-flow proof of
\Cref{thm:flatness} apply in all $k$ directions, with the
prescribed marginals. Our quantitative action bounds remain
two-dimensional.

For an upper bound, transport reference particles first in $s$, then
in $t$. This gives a coherent law whose excess has the same quartic
order as the lower bound at a nonflat centre.
Fix $y_0=(s_0,t_0)\in U$ and write
\[
 U_r=y_0+(-r,r)^2,
 \qquad
 \kappa(y)=\left(\int_{M_y}|\Omega(x)|_g^2\dif\mu_y(x)\right)^{1/2}.
\]
An argument $U_r$ means restriction to this square, with unnormalized
area measure. Uniqueness of the fibre problems leaves $v_s,v_t$ unchanged.
Let $\Fl_{v_i}^a$ denote the local flow of $v_i$ at signed time $a$.
For sufficiently small $r>0$, $y=(s,t)\in U_r$, and $z\in M_{y_0}$, define
\begin{equation}
\label{eq:axial-realization}
 F_r(y,z)=\Fl_{v_t}^{\,t-t_0}\circ
          \Fl_{v_s}^{\,s-s_0}(z),
 \qquad
 Q_r=\bigl(z\mapsto F_r(\cdot,z)\bigr)_\#\mu_{y_0}.
\end{equation}
This construction gives one section for each reference particle. Its
$t$ derivative is $v_t$, while the correction to its $s$ derivative
will be estimated in terms of curvature.

\begin{theorem}[Two-sided quartic action bounds]
\label{thm:quartic-action}
Under \Cref{ass:geometry}, fix $y_0\in U$. There exist $r_*>0$ and
$L<\infty$, depending only on the smooth data on a compact neighbourhood of
$M_{y_0}$, such that $\overline{U_{r_*}}\subset U$ and the following hold
for every $0<r\leq r_*$.
The law $Q_r$ in \cref{eq:axial-realization} is admissible on $U_r$, and
\begin{equation}
\label{eq:quartic-upper}
 0\leq\Dcoh(U_r)
 \leq\Aact_{U_r}(Q_r)-\Eloc(U_r)
 \leq\frac23 e^{2Lr}r^4\sup_{y\in U_r}\kappa(y)^2.
\end{equation}
If $\kappa(y_0)>0$, there are $r_0\in(0,r_*]$ and $c,C>0$,
independent of $r$, such that for every $0<r\leq r_0$,
\begin{equation}
\label{eq:quartic-sandwich}
 c\,\kappa(y_0)^2 r^4
 \leq\Dcoh(U_r)
 \leq\Aact_{U_r}(Q_r)-\Eloc(U_r)
 \leq C\,\kappa(y_0)^2 r^4.
\end{equation}
The constants and $r_0$ may depend on $y_0$, the local geometry, the density,
and their derivatives. No uniform neighbourhood size is asserted as
$\kappa(y_0)$ tends to zero.
\end{theorem}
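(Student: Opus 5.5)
The plan has three parts: an upper bound from the ordered construction, a lower bound from \Cref{thm:certificate} with a localized test form, and a comparison of the two at a nonflat centre.

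\textbf{Upper bound.} The flows are defined on $E^+$, and the fibres near $M_{y_0}$ are compact. Since $\mathrm dH(v_i)=e_i$, for small $r_*$ the map $F_r$ is smooth with $H(F_r(y,z))=y$. Each flow pushes $\mu$ forward along its coordinate line, because $v_i$ is feasible for \eqref{eq:local-minimum} and hence solves the continuity equation fibrewise. It follows that $(F_r(y,\cdot))_\#\mu_{y_0}=\mu_y$, which gives \eqref{eq:occupation}. Finite action follows from bounded derivatives, so $Q_r$ is admissible on $U_r$. By \Cref{prop:residual} applied on $U_r$, the excess equals $\tfrac12\int|r_s|^2+|r_t|^2$, and $r_t\equiv0$ since $\partial_tF_r=v_t(F_r)$.

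For $r_s$, set $w(t)=\partial_sF_r-v_s(F_r)$ along a fixed $s$. It is vertical, vanishes at $t=t_0$, and satisfies the variational equation
\[
 \partial_t w=\nabla_w v_t+[v_t,v_s]\circ F_r
\]
in the sense of Lie transport, i.e.\ $\mathcal L_{v_t}$ of the pushed-forward field. More precisely, $(\Fl_{v_t}^{-\tau})_*v_s-v_s$ has $\tau$-derivative given by the pulled-back bracket. Hence
\[
 w(t)=-\int_{t_0}^{t}D\Fl_{v_t}^{\,t-\tau}\,\Omega\bigl(F_r(s,\tau,z)\bigr)\dif\tau .
\]
We bound $|D\Fl|\le e^{L|t-\tau|}$, with $L$ a local Lipschitz constant of $\nabla v_t$, and apply Cauchy--Schwarz in $\tau$. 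Then we change variables $z\mapsto F_r(s,\tau,z)$, using the marginal identity, to turn $\mu_{y_0}$-averages of $|\Omega(F_r(s,\tau,\cdot))|^2$ into $\kappa(s,\tau)^2$. This gives
\[
 \int|w(t)|^2\dif\mu_{y_0}\le e^{2Lr}|t-t_0|^2\sup\kappa^2 .
\]
Integrating $\tfrac12|t-t_0|^2$ over the square yields $\tfrac12\cdot 2r\cdot\tfrac{2r^3}{3}=\tfrac23r^4$, which is \eqref{eq:quartic-upper}. Continuity of $\kappa$ (smoothness of $\Omega$ and of $\mu_y$) then gives the upper half of \eqref{eq:quartic-sandwich}, with $\sup_{U_r}\kappa^2\le 2\kappa(y_0)^2$ for small $r$.

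\textbf{Lower bound.} Apply \Cref{thm:certificate} on $U_r$, which is itself a star-shaped chamber, with the test form
\[
 \alpha=\chi\bigl((H-y_0)/r\bigr)\,\Omega^\flat|_{\V}.
\]
Here $\chi\in C_c^\infty((-1,1)^2)$ is fixed with $\chi=1$ on $[-\tfrac12,\tfrac12]^2$, and $\Omega^\flat$ is the vertical metric dual. Then
\[
 I_\alpha=\int_{U_r}\chi\,\kappa^2\dif y\ge r^2\kappa(y_0)^2
\]
for small $r$. For $B_\alpha$, the derivatives of $\widetilde\alpha$ in the $v_i$ directions contain a term $r^{-1}\partial\chi\,\Omega^\flat$, plus smooth terms. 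This gives $B_\alpha\le C_1 r^{-1}\cdot r\,\bigl(\sup_{U_r}\kappa+O(r)\bigr)$, i.e.\ $B_\alpha\lesssim r(\kappa(y_0)+O(r))$ in $L^2(\mubar)$ over an area $\sim r^2$. The vertical part $A_\alpha$ involves no $r^{-1}$ factor because $H$ is constant on fibres, so $A_\alpha\le C_2$ uniformly in $r$. Substituting into \eqref{eq:certificate-lower}:
\[
 B_\alpha^2\sim r^2\kappa^2,\qquad A_\alpha I_\alpha\sim r^2\kappa^2,
\]
so the denominator is $O(r^2\kappa(y_0)^2)$ once $r\le r_0\sim\kappa(y_0)$, while $I_\alpha^2\gtrsim r^4\kappa(y_0)^4$. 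This gives a bound $c\,r^2\kappa^2$, which is too weak.

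\textbf{Main obstacle.} The naive scaling loses $r^2$, so the real difficulty is scaling the test form correctly. I would instead take
\[
 \alpha=r^2\chi\bigl((H-y_0)/r\bigr)\beta, \qquad \beta\ \text{a fixed vertical form with}\ \int_{M_{y_0}}\beta(\Omega)\dif\mu_{y_0}=\kappa(y_0)^2 .
\]
The point is that the certificate is homogeneous of degree $0$ in $\alpha$ for the linear part, but not for the quadratic term. A more careful estimate has to exploit that $B_\alpha$, coming from $\dif\widetilde\alpha(v_i,\cdot)$, can be made small. Choosing $\beta=\mathrm d\phi$ locally exact on the fibre, with $\phi$ solving a fibre Poisson problem so that $\mathcal L_{v_i}$ of it is controlled, should make $B_\alpha=O(r^2)$ while $I_\alpha\sim r^2\kappa^2$. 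Then $\Dcoh\gtrsim I^2/B^2$ gives order $r^4$ only if $B_\alpha\sim r\cdot r$, and the cutoff derivative contributes $r^{-1}\cdot r^2\cdot r$ (with area factor $r$ in the $L^2$ norm) $=r^2$. With $I_\alpha\sim r^2\kappa^2$, this yields $\Dcoh\ge c\,\kappa^2r^4$ provided $A_\alpha I_\alpha\lesssim B_\alpha^2$. That inequality is where $r_0$ depends on $\kappa(y_0)$, and I expect verifying this bookkeeping to be the hardest step.
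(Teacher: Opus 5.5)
Your upper bound is essentially the paper's argument. It uses variation of constants for the $s$-residual along the $v_t$-flow, Gronwall, Cauchy--Schwarz in the flow time, marginal preservation, and $\frac12\int_{U_r}(t-t_0)^2\dif y=\frac23r^4$. The slips there are cosmetic: the source term is $+[v_s,v_t]=+\Omega$, and $L$ must bound $\|\nabla v_t\|_{\mathrm{op}}$, not be a Lipschitz constant of $\nabla v_t$.

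The genuine gap is in the lower bound. Your first test form $\alpha=\chi((H-y_0)/r)\,g(\Omega,\cdot)|_{\V}$ is exactly the one that works, but you mis-simplified $B_\alpha$. The cutoff derivative contributes $r^{-1}$ pointwise, and the $L^2(\mubar)$ norm over a square of area $4r^2$ contributes a factor $r$. So $B_\alpha\le C_1\kappa_0+C_2r$ with $\kappa_0=\kappa(y_0)$, which is of order one, not $O(r\kappa_0)$; you wrote $C_1r^{-1}\cdot r$ and then turned it into a factor $r$. Now use $I_\alpha\ge c_1\kappa_0^2r^2$ and $A_\alpha\le C_3$, and put $G=\Aact_{U_r}(Q)-\Eloc(U_r)$ and $z=\sqrt{2G}/(\kappa_0r^2)$ in \eqref{eq:certificate}. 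Dividing by $\kappa_0^2r^2$ gives $c_1\le(C_1+C_2r/\kappa_0)z+C_3r^2z^2\le(C_1+1)z+C_3z^2$ once $r\le r_0$ with $C_2r_0\le\kappa_0$ and $r_0\le1$. Hence $z\ge c_2>0$ uniformly in $r$ and $Q$, which is the bound $c\,\kappa_0^2r^4$. Your diagnosis that $c\,r^2\kappa^2$ is ``too weak'' is also backwards. An $r^2$ lower bound would be stronger than needed and would contradict the $O(r^4)$ upper bound you had just proved, which should have flagged the arithmetic error.

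The repair you propose cannot work, for two reasons.
\begin{enumerate}
\item $I_\alpha$, $B_\alpha$ and $A_\alpha$ are all absolutely homogeneous of degree one in $\alpha$. So \eqref{eq:certificate} and the ratio in \eqref{eq:certificate-lower} are invariant under $\alpha\mapsto\lambda\alpha$: the quadratic term is $A_\alpha$ times the excess, not quadratic in $\alpha$. The prefactor $r^2$ therefore changes nothing.
\item A fibrewise exact $\beta=\mathrm d\phi$ makes $I_\alpha$ vanish. The fields $v_s$ and $v_t$ are $\mubar$-divergence free, hence so is $\Omega=[v_s,v_t]$, because $\operatorname{div}[X,Y]=X(\operatorname{div}Y)-Y(\operatorname{div}X)$. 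Since $\Omega$ is vertical, \Cref{lem:fibre-disintegration} gives $\int_{M_y}\mathrm d\phi(\Omega)\dif\mu_y=0$ for every $\phi$. Thus your normalization $\int_{M_{y_0}}\beta(\Omega)\dif\mu_{y_0}=\kappa(y_0)^2$ is impossible when $\kappa(y_0)>0$; the torus curvature \eqref{eq:torus-curvature} is indeed divergence free.
\end{enumerate}
As written, the left inequality in \eqref{eq:quartic-sandwich} is not established. The remedy is to keep your first test form and redo the $B_\alpha$ bookkeeping as above.
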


These bounds establish the sharp quartic order of the minimum and show
that the ordered construction's excess is at most $C/c$ times the optimum
on small squares.

\subsection{Ordered transports}
\label{subsec:order-defect}

We can also compare one particle's endpoints under the two coordinate
orders. Fix $y_0\in U$. For small signed $\varepsilon$, both paths to
$y_0+\varepsilon(e_s+e_t)$ lie in $U$. On $M_{y_0}$ set
\begin{equation}
 P_{st}^{\varepsilon}
 =\operatorname{Fl}_{v_t}^{\varepsilon}\circ
   \operatorname{Fl}_{v_s}^{\varepsilon},\qquad
 P_{ts}^{\varepsilon}
 =\operatorname{Fl}_{v_s}^{\varepsilon}\circ
   \operatorname{Fl}_{v_t}^{\varepsilon},
 \label{eq:ordered-maps}
\end{equation}
and, using the ambient distance $d_g$, set
\begin{equation}
 \delta_{\mathrm{loop}}(\varepsilon)
 =\left(\int_{M_{y_0}}
 d_g(P_{st}^{\varepsilon}(x),P_{ts}^{\varepsilon}(x))^2
 \dif\mu_{y_0}(x)\right)^{1/2}.
 \label{eq:order-defect}
\end{equation}

\begin{proposition}[Expansion of the order defect]
\label{thm:order-defect}
Under \Cref{ass:geometry}, there are
$C<\infty$ and $\varepsilon_0>0$, depending on $y_0$ and the smooth data
on a compact neighbourhood of its fibre, such that for
$|\varepsilon|<\varepsilon_0$ both maps in \eqref{eq:ordered-maps} are
diffeomorphisms from $M_{y_0}$ onto
$M_{y_0+\varepsilon(e_s+e_t)}$ and
\begin{equation}
 (P_{st}^{\varepsilon})_\#\mu_{y_0}
 =(P_{ts}^{\varepsilon})_\#\mu_{y_0}
 =\mu_{y_0+\varepsilon(e_s+e_t)}.
 \label{eq:ordered-marginals}
\end{equation}
Moreover,
\begin{equation}
 \left|\delta_{\mathrm{loop}}(\varepsilon)
 -\varepsilon^2\|\Omega|_{M_{y_0}}\|_{L^2(\mu_{y_0};g)}\right|
 \leq C|\varepsilon|^3.
 \label{eq:order-expansion}
\end{equation}
The same assertion holds if the distance in \eqref{eq:order-defect} is
replaced by the intrinsic distance of the common endpoint fibre, with a
possibly different constant $C$.
\end{proposition}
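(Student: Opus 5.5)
The plan has three parts: first the mapping and marginal claims, then a Taylor expansion of the flow commutator, and finally conversion of the displacement into the $L^2$ statement.

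\emph{Mapping and marginals.} Since $\mathrm dH(v_i)=e_i$, we have $H(\Fl_{v_i}^a(x))=H(x)+ae_i$. The fibres are compact and $H$ is proper on $E^+$, so the flows exist on a compact neighbourhood of $M_{y_0}$ for $|a|<\varepsilon_0$, as long as the coordinate segments stay in $U$. Each flow restricts to a diffeomorphism $M_y\to M_{y+ae_i}$, with inverse given by the flow at time $-a$. For the marginals, the key fact is that the weak continuity constraint in \eqref{eq:local-minimum} holds for the smooth minimizer $v_i$ of \Cref{prop:local-ot}. Testing it against $\varphi(x)=\chi(H(x))\psi(x)$ shows that $y\mapsto\mu_y$ solves $\partial_i\mu_y+\diver_{M}(\mu_y v_i)=0$ along coordinate lines, in the fibrewise sense. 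Uniqueness for the continuity equation driven by a smooth field, via the method of characteristics, then gives $(\Fl_{v_i}^a)_\#\mu_y=\mu_{y+ae_i}$. Composing the two flows in either order yields \eqref{eq:ordered-marginals}.

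\emph{Commutator expansion.} I would use the standard second-order expansion of the flow commutator. In local coordinates near a point of $M_{y_0}$ (or through the Euclidean embedding into $\R^N$), write $\Fl_{v}^\varepsilon(x)=x+\varepsilon v+\tfrac{\varepsilon^2}2 Dv\,v+O(\varepsilon^3)$. Then
\[
P_{st}^\varepsilon(x)-P_{ts}^\varepsilon(x)=\varepsilon^2\bigl(Dv_t\,v_s-Dv_s\,v_t\bigr)(x)+O(\varepsilon^3)=\varepsilon^2\,\Omega(x)+O(\varepsilon^3).
\]
The $O(\varepsilon^3)$ is uniform on the compact fibre, with constants controlled by the $C^3$ norms of $v_s,v_t$ on a compact neighbourhood. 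Those norms are finite because the fields are smooth and extend to $E^+$.

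\emph{From displacement to the $L^2$ statement.} Ambient Riemannian distance agrees with the Euclidean length up to a second-order error, in the form $d_g(p,q)=|p-q|_{g(p)}+O(|p-q|^2)$, uniformly on compacts. This gives pointwise
\[
\bigl|d_g(P_{st}^\varepsilon x,P_{ts}^\varepsilon x)-\varepsilon^2|\Omega(x)|_g\bigr|\le C|\varepsilon|^3.
\]
The reverse triangle inequality in $L^2(\mu_{y_0})$, together with the fact that $\mu_{y_0}$ is a probability measure, then gives \eqref{eq:order-expansion}. For the intrinsic fibre distance, both endpoints lie in the common fibre $M_{y_0+\varepsilon(e_s+e_t)}$, which is a compact submanifold varying smoothly with $y$. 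Its intrinsic distance is comparable to the ambient one with $d_{M}(p,q)=d_g(p,q)+O(d_g(p,q)^2)$, uniformly in small $\varepsilon$, so only the constant changes. The point that $\Omega$ is vertical, so that the leading term is tangent to the fibre, is consistent with this.

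\emph{Main obstacle.} The step I expect to need the most care is uniformity. The constants in both the flow expansion and the distance comparison must be uniform over the fibre and over $|\varepsilon|<\varepsilon_0$. My plan is to cover a compact neighbourhood $K'$ of $M_{y_0}$ by finitely many charts, or simply to work in the embedding $\R^N$ with a Lipschitz extension of the fields. I would then use Taylor's theorem with remainder bounded by $\sup_{K'}$ of derivatives up to order three.
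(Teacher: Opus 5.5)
Your proposal is correct and follows essentially the same route as the paper. In both, properness gives the flows and each coordinate segment preserves the marginals. A second-order Taylor expansion over finitely many charts then isolates $\varepsilon^2(\mathrm d v_t\,v_s-\mathrm d v_s\,v_t)=\varepsilon^2\Omega$. Finally, a short-distance comparison and the reverse triangle inequality in $L^2(\mu_{y_0})$ give the $O(|\varepsilon|^3)$ bound, and the intrinsic-fibre version is handled the same way.

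The only differences are minor. You rederive the marginal identity from uniqueness for the continuity equation instead of citing \eqref{eq:coordinate-flow-marginals}. If you do that, write the fibrewise equation with the fibre component $w_i$ in bundle coordinates, as in \eqref{eq:smooth-continuity}, since $v_i$ is not vertical. You should also state explicitly that replacing the metric at $P_{st}^\varepsilon(x)$ by the metric at $x$ costs only a relative $O(|\varepsilon|)$ error.
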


\begin{corollary}[Squared defect]
\label{cor:squared-defect}
With the notation and hypotheses of \Cref{thm:order-defect},
\begin{equation*}
 \delta_{\mathrm{loop}}(\varepsilon)^2
 =\varepsilon^4\int_{M_{y_0}}|\Omega|_g^2\dif\mu_{y_0}
   +O(|\varepsilon|^5).
\end{equation*}
The constant is independent of sufficiently small signed $\varepsilon$.
\end{corollary}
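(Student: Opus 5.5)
The corollary follows by squaring the expansion \eqref{eq:order-expansion} of \Cref{thm:order-defect}. Write
\[
a=\|\Omega|_{M_{y_0}}\|_{L^2(\mu_{y_0};g)}=\Bigl(\int_{M_{y_0}}|\Omega|_g^2\dif\mu_{y_0}\Bigr)^{1/2}.
\]
This is a finite constant because $\Omega$ is smooth and $M_{y_0}$ is compact. Set $\rho(\varepsilon)=\delta_{\mathrm{loop}}(\varepsilon)-\varepsilon^2 a$. By \Cref{thm:order-defect}, $|\rho(\varepsilon)|\leq C|\varepsilon|^3$ for $|\varepsilon|<\varepsilon_0$, with $C$ and $\varepsilon_0$ independent of $\varepsilon$.

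Expanding the square gives
\[
\delta_{\mathrm{loop}}(\varepsilon)^2=\varepsilon^4a^2+2\varepsilon^2a\,\rho(\varepsilon)+\rho(\varepsilon)^2.
\]
The cross term is bounded by $2aC|\varepsilon|^5$. The last term is bounded by $C^2\varepsilon^6\leq C^2\varepsilon_0|\varepsilon|^5$. Hence, for all $|\varepsilon|<\varepsilon_0$,
\[
\bigl|\delta_{\mathrm{loop}}(\varepsilon)^2-\varepsilon^4a^2\bigr|\leq(2aC+C^2\varepsilon_0)|\varepsilon|^5.
\]
The constant $2aC+C^2\varepsilon_0$ depends only on $y_0$ and the local smooth data, as required. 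Since $a^2=\int_{M_{y_0}}|\Omega|_g^2\dif\mu_{y_0}$, this is the claimed expansion.

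The only points to check are that $a<\infty$ and that the constant in \Cref{thm:order-defect} is uniform in $\varepsilon$; both are part of that statement. The argument is the same whether $d_g$ or the intrinsic fibre distance is used, since \Cref{thm:order-defect} covers both.
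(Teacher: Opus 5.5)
Your proposal is correct and is essentially the paper's argument: write $\delta_{\mathrm{loop}}(\varepsilon)=a\varepsilon^2+\rho(\varepsilon)$ with $|\rho(\varepsilon)|\leq C|\varepsilon|^3$, then bound the cross term $2a\varepsilon^2\rho(\varepsilon)$ and the square $\rho(\varepsilon)^2$ by multiples of $|\varepsilon|^5$. Your explicit constant $2aC+C^2\varepsilon_0$ also covers the case $a=0$, which the paper mentions separately.
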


\section{Examples and Numerical Experiments}
\label{sec:numerical-experiments}
\newcommand{\TestNLL}{2.535}
\newcommand{\BaselineNLL}{2.866}
\newcommand{\KappaMin}{0.162}
\newcommand{\KappaMax}{1.337}
\newcommand{\LoopError}{1.01}
\newcommand{\LoopErrorMax}{2.67}
\newcommand{\AxialSlope}{4.004}
\newcommand{\AxialSlopeMin}{3.987}
\newcommand{\AxialSlopeMax}{4.090}
\newcommand{\GeneratorSlope}{2.000}
\newcommand{\SamplerMoment}{1.9\times10^{-3}}
\newcommand{\MomentError}{3.6\times10^{-7}}
\newcommand{\IdentityError}{3.6\times10^{-14}}

\newcommand{\RadialReduction}{50.0}
\newcommand{\RadialReductionMin}{49.97}
\newcommand{\RadialReductionMax}{50.17}
\newcommand{\RadialSlope}{4.005}
\newcommand{\WitnessRatioMin}{1.203}
\newcommand{\WitnessRatioMax}{1.209}
\newcommand{\WitnessRatioMedian}{1.205}

\newcommand{\RepresentationRMSE}{0.62329}
\newcommand{\SupplementSamplerS}{9.482\times10^{-4}}
\newcommand{\SupplementSamplerT}{6.658\times10^{-5}}
\newcommand{\SupplementSamplerMoment}{1.875\times10^{-3}}
\newcommand{\SupplementSpatial}{9.386\times10^{-10}}
\newcommand{\SupplementParameter}{5.791\times10^{-8}}
\newcommand{\SupplementIdentity}{3.644\times10^{-14}}
\newcommand{\SupplementAnalytic}{1.491\times10^{-12}}
\newcommand{\SupplementRadialRefinement}{2.996\times10^{-6}}
\newcommand{\SupplementWitnessRefinement}{8.852\times10^{-8}}
\newcommand{\SupplementFurtherRadial}{9.205\times10^{-6}}
\newcommand{\SupplementFurtherWitness}{1.091\times10^{-7}}
\newcommand{\SupplementExtensionMoment}{4.071\times10^{-8}}
\newcommand{\SupplementExtensionIdentity}{0.9827}

An explicit torus family provides an analytic benchmark for the solver.
We then compare joint realizations of a conditional phase model fitted
to electricity-demand data.

\subsection{An explicit nonflat torus benchmark}
\label{subsec:torus-benchmark}
Two interacting Fourier modes give nonzero curvature and a positive
coherence gap for smooth positive densities on a product space.
Let $\T^2=(\R/2\pi\mathbb Z)^2$ with its standard flat metric and
coordinates $x=(x_1,x_2)$, and write
$\omega=(2\pi)^{-2}\dif x_1\dif x_2$ for normalized Haar measure. Fix
$a,b\neq0$ and choose $\delta>0$ with $\delta(|a|+|b|)<1$. On
$U_\delta=(-\delta,\delta)^2$ consider the fibre probability densities
\begin{equation}
 \rho_{s,t}(x)=1+as\cos x_1+bt\cos(x_1+x_2),\qquad
 \dif\mu_{s,t}(x)=\rho_{s,t}(x)\dif\omega(x).
 \label{eq:torus-density}
\end{equation}
We identify these with probabilities on
$\{(s,t)\}\times\T^2$ in the product space $\R^2\times\T^2$.
In the decomposition $v_i=e_i\oplus w_i$, $w_i$ is tangent to the torus;
vertical vectors such as $\Omega$ are displayed through their two fibre components.

\begin{proposition}[A coarea example with nonzero curvature]
\label{prop:torus-example}
The family \eqref{eq:torus-density} is realized by coarea conditioning a
smooth strictly positive reference probability on
$\X=\R^2\times\T^2$, with product metric and $H(y,x)=y$, and satisfies
\Cref{ass:geometry} on $U_\delta$.
Writing $v_i=e_i\oplus w_i$, at $y=0$ one has
\begin{equation*}
 w_s(0,x)=-a(\sin x_1,0),\qquad
 w_t(0,x)=-\frac b2\sin(x_1+x_2)(1,1),
\end{equation*}
and
\begin{equation}
 \Omega(0,x)=ab
 \begin{pmatrix}
 -\frac12\sin x_2-\frac1{10}\sin(2x_1+x_2)\\[1mm]
 \frac15\sin(2x_1+x_2)
 \end{pmatrix},\qquad
 \|\Omega(0,\cdot)\|_{L^2(\omega)}^2=\frac3{20}a^2b^2.
 \label{eq:torus-curvature}
\end{equation}
Consequently $0<\Dcoh(U_\delta)<\infty$, and at the base point $y_0=0$,
\begin{equation}
 \delta_{\mathrm{loop}}(\varepsilon)=|ab|\sqrt{\frac3{20}}\,\varepsilon^2
                       +O(|\varepsilon|^3).
 \label{eq:torus-defect}
\end{equation}
\end{proposition}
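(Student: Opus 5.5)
The plan is to verify the three parts in order: the coarea realization, the explicit local velocities and curvature at $y=0$, and then the consequences, which follow from \Cref{thm:flatness} and \Cref{thm:order-defect}.

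\textbf{Coarea realization.} Since $\delta(|a|+|b|)<1$ is strict, I will pick $\delta'>\delta$ with $\delta'(|a|+|b|)<1$. I will also pick a smooth bounded odd function $\chi:\R\to(-\delta',\delta')$ that equals the identity on $[-\delta'',\delta'']$, for some $\delta<\delta''<\delta'$. Then set
\[
 p(s,t,x)=\phi(s,t)\bigl(1+a\chi(s)\cos x_1+b\chi(t)\cos(x_1+x_2)\bigr)(2\pi)^{-2},
\]
where $\phi>0$ is a smooth probability density on $\R^2$ (a Gaussian, say). This $p$ is smooth and strictly positive on $\X=\R^2\times\T^2$, and it integrates to one because the cosine terms have zero Haar mean. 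For $H(y,x)=y$ the map $\mathrm dH$ is a coordinate projection, so $J_H\equiv1$. Moreover $H$ is a proper surjective submersion with connected compact fibres $\{y\}\times\T^2$ over $U^+=(-\delta'',\delta'')^2$. Squares are star-shaped Lipschitz domains and $\overline{U_\delta}\Subset U^+$. Formula \eqref{eq:coarea} then gives $\mu_y=\rho_y\,\omega$ because $\phi(y)$ cancels against $Z(y)$. So \Cref{ass:geometry} holds.

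\textbf{Velocities.} For the product metric the normal fields are $n_i=e_i$, and the vertical gradient is the torus gradient. Integrating \eqref{eq:poisson} by parts in $y$ turns it into the fibrewise equation
\[
 -\diver_x(\rho_y\nabla_xu_i)=\partial_i\rho_y,\qquad \int u_i\,\rho_y\dif\omega=0 .
\]
At $y=0$ we have $\rho=1$, so the equations are $\Delta u_s=-a\cos x_1$ and $\Delta u_t=-b\cos(x_1+x_2)$. These give $u_s=a\cos x_1$ and $u_t=\tfrac b2\cos(x_1+x_2)$, and hence the stated $w_s(0,\cdot)$ and $w_t(0,\cdot)$ via \Cref{prop:local-ot}.

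\textbf{Curvature.} Writing $v_i=e_i\oplus w_i$, the bracket is
\[
 \Omega=\partial_sw_t-\partial_tw_s+[w_s,w_t]_{\T^2}.
\]
This requires the first-order $y$-derivatives of the $w_i$ at $0$. I will obtain them by differentiating the fibre equation. Differentiating the $s$-equation in $t$ gives
\[
 \Delta(\partial_tu_s)=-\diver\bigl(b\cos(x_1+x_2)\nabla u_s\bigr),
\]
and symmetrically $\Delta(\partial_su_t)=-\diver\bigl(a\cos x_1\nabla u_t\bigr)$. Both right-hand sides are finite trigonometric polynomials, in the modes $x_2$ and $2x_1+x_2$ after product-to-sum. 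Inverting the Laplacian mode by mode, where mode $k$ contributes a factor $-|k|^{-2}$ (here $1$ and $5$), gives the corrections. Adding the torus bracket $(w_s\cdot\nabla)w_t-(w_t\cdot\nabla)w_s$ then yields \eqref{eq:torus-curvature}.

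The norm follows by orthogonality of the sine modes, each with mean square $\tfrac12$:
\[
 \tfrac14\cdot\tfrac12+\tfrac1{100}\cdot\tfrac12+\tfrac1{25}\cdot\tfrac12=\tfrac3{20}.
\]
This Fourier bookkeeping, especially keeping signs consistent between the bracket convention and the differentiated Poisson corrections, is the main obstacle. I would check it by confirming that the $x_2$-mode cancels in the second component.

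\textbf{Consequences.} Since $\Omega(0,\cdot)\neq0$, \Cref{thm:flatness} gives $\Dcoh(U_\delta)>0$, and \Cref{prop:residual} gives finiteness. Finally, \Cref{thm:order-defect} at $y_0=0$, with $\|\Omega|_{M_0}\|_{L^2(\mu_0)}=|ab|\sqrt{3/20}$ (recall $\mu_0=\omega$), yields \eqref{eq:torus-defect}.
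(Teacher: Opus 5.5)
Your proposal is correct and follows essentially the paper's route: a cutoff realization with $J_H=1$ and $Z$ equal to the base density, the fibre equation $-\diver(\rho_y\nabla u_i)=\partial_i\rho_y$ solved at $y=0$, the curvature computed as $\nabla(\partial_su_t-\partial_tu_s)+\mathrm dw_t\,w_s-\mathrm dw_s\,w_t$ from the differentiated Poisson equations, and the consequences drawn from \Cref{thm:flatness}, \Cref{prop:residual}, and \Cref{thm:order-defect}. The differences are cosmetic: you use separate saturating cutoffs in $s$ and $t$, whereas the paper multiplies the whole perturbation by one bump equal to one on $U^+$; and you compute $\partial_tu_s$ and $\partial_su_t$ separately before subtracting, which gives the paper's $\Delta q=\tfrac{ab}{4}(\cos x_2-\cos(2x_1+x_2))$.
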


For $a=b=1$ at $y=0$, a $33^2$ Fourier-grid calculation gives a discrete
$L^2(\omega)$ curvature error of $\SupplementAnalytic$ against
\eqref{eq:torus-curvature}; the computed squared norm agrees with $3/20$
to 14 decimal places. A constant-density control gives zero curvature
and phase velocities to reported precision. Supplementary Information~1--2
give the implementation and accuracy checks.

\subsection{A fitted conditional phase model}
For a conditional density fitted to electricity-demand data, we compute
the local OT fields, their curvature, and the excess actions of several
joint realizations. The fitted marginals remain fixed: curvature tests
whether any realization can attain the local OT benchmark, while
\eqref{eq:residual-identity} measures the excess of each coupling.

We use the UCI household electricity-demand data \citep{HebrailBerard2006}.
The 1,358 complete days are averaged into 96 quarter-hour values and split
chronologically into 950 training, 203 validation, and 205 test days.
From each log-load profile we extract its mean, log standard deviation,
and the phases $\theta=(\theta_1,\theta_2)\in\T^2$ of its first two Fourier modes.
The standardized mean and log standard deviation form $y=(s,t)$,
respectively; standardization uses training
data only. We fit the conditional phase density
\begin{equation*}
 \rho_y(\theta)
 =\frac{\exp\{a(y)\cdot f(\theta)\}}
 {\int_{\T^2}\exp\{a(y)\cdot f(\xi)\}\dif\omega(\xi)}.
\end{equation*}
Here $f:\T^2\to\R^{12}$ collects the sine and cosine features at frequencies
$(1,0)$, $(0,1)$, $(1,1)$, $(1,-1)$, $(2,0)$, $(0,2)$;
$a:\R^2\to\R^{12}$ is a $2$--$16$--$12$ tanh
network. We minimize negative log-likelihood (NLL) with Adam (step $0.008$,
weight decay $0.002$, at most 1,200 full-batch epochs). Validation
likelihood selects checkpoints separately for seeds 11, 23, and 37.
Reported NLLs use the Lebesgue phase density $\rho_y/(2\pi)^2$.
Mean test NLL is $\TestNLL$ nats, compared with
$\BaselineNLL$ for a condition-independent density using the same features.

The conditional model is a density on two phases, with a fixed decoder
for visualization in profile space. Successive inversion of the marginal
and conditional phase cumulative distribution functions (CDFs) gives
a shared-seed generator $G_y:(0,1)^2\to\T^2$
\citep{Rosenblatt1952}, with seed $z$ uniformly distributed on $(0,1)^2$.
Its decoded log-load profile is the sum of
two cosine modes, rescaled to the prescribed mean and standard deviation;
their energy ratio $0.418:0.582$ is fixed from training data.
Exponentiation gives positive demand profiles (\hyperref[fig:generator]{Fig.~\ref*{fig:generator}a}).
A 4,096-seed check at the median-control centre of fit 11 gives
Fourier-moment error $\SamplerMoment$. These errors are Euclidean norms
of the differences between the 12 generated and target feature moments.
This representation makes the conditional law and its generator explicit;
it does not fit the full 96-dimensional distribution of daily profiles.

On $\R^2\times\T^2$ with $H(y,\theta)=y$ and the
product metric, the reference law $\gamma(y)\dif y\,\rho_y\dif\omega$,
with $\gamma$ a positive smooth probability density on $\R^2$, satisfies
\Cref{ass:geometry}. Costs use the product metric on standardized
condition and phase coordinates, not a metric induced by decoded demand
profiles. The identical base contributions cancel in excess actions.
Test-day reconstruction from observed controls and phases gives log-load
root mean squared error (RMSE) $\RepresentationRMSE$, measuring representation
loss, not sampling error.
Supplementary Information~1 gives per-fit scores and check coverage.

\begin{figure}[tb]
 \centering
 \includegraphics[width=\linewidth]{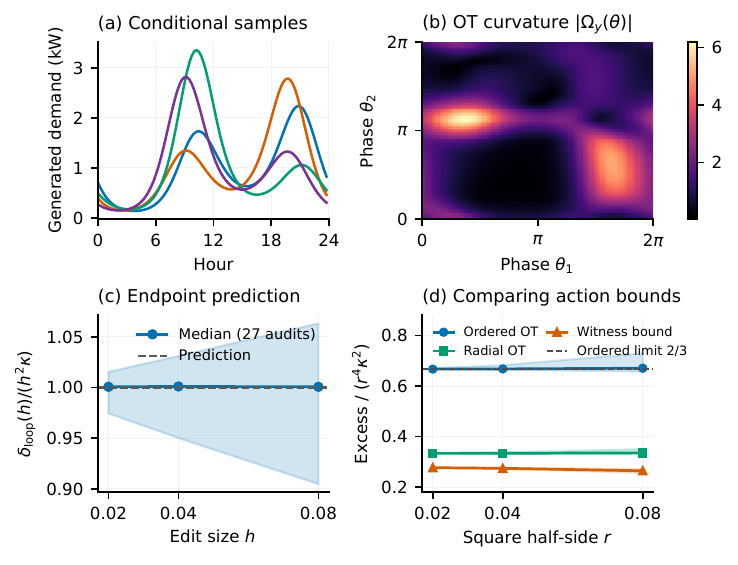}
 \caption{A fitted two-phase generator and its OT energy comparison.
 Here $\Omega_y(\theta)=\Omega(y,\theta)$ and $\kappa=\kappa(y_0)$ at
 each audit centre $y_0$.
 Panels (a,b) use seed 11 at the median-control centre; the four latent
 seeds in (a) are fixed in advance. Panels (c,d) report medians and
 10--90\% ranges over all 27 fit--centre pairs, not confidence intervals.
 Panel (d) compares two feasible excess actions with a numerical witness
 bound, normalized by $r^4\kappa^2$. The dashed $2/3$ line is the
 small-square coefficient of the ordered construction
 \eqref{eq:axial-leading-action}.
 The displayed curvature and costs concern the fitted phase model.}
 \label{fig:generator}
\end{figure}

We audit all nine combinations of the training-control 30th, 50th, and
70th percentiles, for each of the three fits. Centres are fixed before
curvature is computed. Fourier collocation solves the weighted Poisson
equations; parameter differences are used to evaluate the curvature in \eqref{eq:curvature},
and fourth-order Runge--Kutta (RK4) flows give the ordered updates.
Refinement from $33^2$ to $65^2$ changes curvature norms by less
than $10^{-6}$ relatively. The analytic and flat controls are reported in
\Cref{subsec:torus-benchmark}; Supplementary Information~1--2 specify
the solvers and accuracy checks.

For each audit centre $y_0$, write $\kappa=\kappa(y_0)$.
Across these 27 audits, the estimated $\kappa$ ranges from $\KappaMin$ to $\KappaMax$.
We run the two coordinate orders with the same weighted particles for
common coordinate increments $h=0.08,0.04,0.02$. At $h=0.02$, the normalized error
$|\delta_{\mathrm{loop}}(h)/(h^2\kappa)-1|$ has median
$\LoopError\%$ and maximum $\LoopErrorMax\%$. Thus the nonzero curvature
predicts particle displacement as in \eqref{eq:order-expansion}, although
both orders have the same
target marginal; the largest endpoint Fourier-moment error is
$\MomentError$.

On $U_r$, $r=0.08,0.04,0.02$, starting the ordered flows at $G_{y_0}(z)$
gives a second generator with the same fitted marginals in the continuum
construction. Each comparison is anchored at its own centre $y_0$.
Slopes are least-squares slopes of log excess action against $\log r$
over these three radii. The ordered excess-action slope has median $\AxialSlope$ and range
$[\AxialSlopeMin,\AxialSlopeMax]$, whereas the original generator's
median slope is $\GeneratorSlope$. These actions use tensor three-point
Gauss quadrature; implicit CDF differentiation gives the original
generator's velocity. Its excess computed from \eqref{eq:action} and
\eqref{eq:local-energy} agrees with the residual evaluation to $\IdentityError$.
The original generator's near-quadratic excess is consistent with a
nonzero central residual integrated over an area of order $r^2$.
For the ordered construction, the residual vanishes at the centre and
is $O(r)$ on the square, giving an integrated excess of $O(r^4)$.

We next compare the ordered construction with the radial maps of
\Cref{lem:radial-comparison}, which transport along the straight segment
from $y_0$ to each $y$. Both constructions have the same marginals.
The radial excess has median slope $\RadialSlope$; at $r=0.02$ it is
$\RadialReduction\%$ below the ordered cost, with reductions
$\RadialReductionMin$--$\RadialReductionMax\%$ over all 27 pairs.
Thus radial transport reduces the observed excess while retaining
quartic scaling on the tested squares.

We compare these costs with \eqref{eq:certificate-lower}, using
$\chi(y)=(1-(s-s_0)^2/r^2)(1-(t-t_0)^2/r^2)$ times the fibre one-form
obtained as the metric dual of the Fourier interpolant of the computed
central curvature, extended independently of $y$ in product coordinates.
The zero-trace factor is handled by bounded interior cutoffs that converge
in $H^1$ and preserve the bound on the vertical comass; Supplementary
Information~1 gives this extension of the witness class.
A Fourier coefficient sum bounds the vertical comass.
At $r=0.02$, radial cost divided by the evaluated lower bound ranges
from $\WitnessRatioMin$ to $\WitnessRatioMax$
(\hyperref[fig:generator]{Fig.~\ref*{fig:generator}d}). These are numerical evaluations of continuum
bounds, not a certified enclosure or a computation of the unrestricted
minimum. All 27 smallest squares are refined; failed accuracy checks
trigger further refinement, not exclusion. Refinement comparisons and
reported direct/residual checks agree within $1\%$ (Supplementary Information~1).
Moment and refinement checks concern the fitted laws; they do not
establish conditional calibration or curvature of the population law.

\section{Proofs}
\label{sec:proofs}
We first solve the local minimum-energy problem. We then construct joint
couplings and express their excess action as one half of the integrated
squared velocity correction. The remaining proofs
estimate this difference: Stokes' formula gives a lower bound for every
coupling, and successive coordinate flows give the upper bound.

\subsection{Local velocities}
\label{subsec:proof-local}

The Poisson equations are posed on different fibres. To solve them and
differentiate their solutions in $y$, we first express all fibres in
coordinates on one fixed manifold. This change of coordinates is not
required to minimize transport cost.

\begin{proposition}[Smooth compact fibre coordinates]
\label{ass:local}
Under \Cref{ass:geometry}, there are a compact connected
manifold $M$ without boundary, a smooth
Riemannian metric $g_0$ on $M$, and a smooth bundle trivialization
$\Phi:U^+\times M\to E^+$ with $H(\Phi(y,z))=y$. Set
\[
 g_y=\Phi_y^*(g|_{M_y}),\qquad
 \Phi_y^*\mu_y=h_y\dif\mathrm{vol}_{g_0}.
\]
The metrics $g_y$ and probability densities $h_y>0$ depend smoothly on
$(y,z)\in U^+\times M$.
\end{proposition}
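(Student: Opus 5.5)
The plan is to obtain $\Phi$ by flowing fibres along a smooth horizontal lift of the parameter coordinate fields. The lift is Ehresmann's construction, adapted to the star-shaped base $U^+$. Since $H:E^+\to U^+$ is a proper surjective submersion, the normal fields $n_s,n_t$ from \Cref{subsec:coarea-setting} are smooth on $E^+$ and satisfy $\mathrm dH(n_i)=e_i$. Fix a star centre $y_\star\in U^+$ and set $M=M_{y_\star}$, with $g_0$ the metric induced by $g$. Then $M$ is a compact submanifold without boundary, because $H$ is proper and $y_\star$ is a regular value, and it is connected by \Cref{ass:geometry}.

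For $y\in U^+$, the segment $\tau\mapsto y_\star+\tau(y-y_\star)$, $\tau\in[0,1]$, lies in $U^+$. Define the vector field $X_y=(y-y_\star)_s n_s+(y-y_\star)_t n_t$ on $E^+$. Consider the ODE $\dot x=X_y(x)$ with $x(0)=z\in M$. Then $H(x(\tau))=y_\star+\tau(y-y_\star)$, so the solution stays in $H^{-1}$ of the segment. That preimage is compact by properness, so the solution exists on $[0,1]$. Set $\Phi(y,z)=x(1)$. Smooth dependence of ODE solutions on parameters and initial data makes $\Phi$ smooth on $U^+\times M$, with $H(\Phi(y,z))=y$.

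The main step is showing that each $\Phi_y:M\to M_y$ is a diffeomorphism and that $\Phi$ is a diffeomorphism onto $E^+$. The inverse of $\Phi_y$ is obtained by running the same nonautonomous flow backward from $M_y$. The backward flow again stays in a compact set and exists, which gives bijectivity with a smooth inverse. The inverse of $\Phi$ is then $x\mapsto\bigl(H(x),\Phi_{H(x)}^{-1}(x)\bigr)$. It is smooth because the backward flow depends smoothly on its starting point $x$ and on the parameter $y=H(x)$. The only subtle point is uniform existence time, and this is exactly what properness together with star-shapedness provides. A general simply connected base would instead need Ehresmann's theorem plus contractibility.

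Finally, $g_y=\Phi_y^*(g|_{M_y})$ has components $g\bigl(\partial_z\Phi(y,z)\cdot,\partial_z\Phi(y,z)\cdot\bigr)$. These are smooth in $(y,z)$ and positive definite, since $\Phi_y$ is an immersion. By \eqref{eq:coarea}, the measure $\Phi_y^*\mu_y$ has density
\[
 h_y=\frac{p\circ\Phi_y}{Z(y)\,J_H\circ\Phi_y}\,\frac{\dif\mathrm{vol}_{g_y}}{\dif\mathrm{vol}_{g_0}}.
\]
The volume ratio is $\sqrt{\det g_y/\det g_0}$ in local coordinates, which is smooth and positive. The factors $p$ and $J_H$ are smooth and positive on $E^+$. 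The normalization $Z(y)=\int_M (p/J_H)\circ\Phi_y\,\dif\mathrm{vol}_{g_y}$ is an integral over the fixed compact $M$ of a smooth positive integrand, hence smooth and positive by differentiation under the integral. Therefore $h_y>0$ depends smoothly on $(y,z)$, which completes the plan.
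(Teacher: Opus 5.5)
Your proposal is correct and takes essentially the same route as the paper: the time-one flow of $(y-y_\star)_s n_s+(y-y_\star)_t n_t$ from a star centre of $U^+$, with properness giving existence along the segment, the reversed flow giving the inverse, smooth parameter dependence giving a smooth trivialization, and $h_y$ written through the volume Jacobian, with $Z(y)$ an integral over the fixed compact fibre. One minor wording point: for fixed $y$ the field $X_y$ is autonomous, so the inverse of $\Phi_y$ is simply $\Fl_{X_y}^{-1}$ restricted to $M_y$; it is not a nonautonomous backward flow.
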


Here $\Phi_y(z)=\Phi(y,z)$, and $\Phi_y^*$ denotes pullback of tensors
or measures.

\begin{proof}
Let $c$ be a star centre of $U^+$, set $M=M_c$, and transport along
the flow of $N_y=(y_s-c_s)n_s+(y_t-c_t)n_t$ for time one.
The projected path is the segment from $c$ to $y$. Properness confines
its lift to a compact set, so the flow exists; reversing the segment
gives its inverse. Locally in $y$, the segments lie in one compact
subset of $U^+$. Smooth dependence of flows therefore gives a smooth
trivialization \citep[Chapter~9]{Lee2012}.
Define its positive volume Jacobian $j_y$ by
$\Phi_y^*\dif\mathrm{vol}_{M_y}=j_y\dif\mathrm{vol}_{g_0}$. Thus
$h_y=(p\circ\Phi_y)j_y/((J_H\circ\Phi_y)Z(y))$ is smooth and positive:
$Z(y)$ is the integral of a smooth positive function on the fixed compact
fibre. This argument uses volume densities and does not require orientability.
\end{proof}

On compact parameter sets, $g_y$ and $h_y$ are uniformly bounded above
and below relative to $g_0$. These coordinates define
$L^2(U;H^1(M_y))$. The coarea formula applied to \eqref{eq:occupation-measure} gives
\begin{equation*}
 \dif\mubar=\frac{p}{Z(H)}\dif\mathrm{vol}_{E},
\end{equation*}
so $\mubar$ has full support on $E$.

\begin{lemma}[Fibrewise disintegration of continuity]
\label{lem:fibre-disintegration}
Let $Z\in L^2(E,\mubar;\V)$ satisfy
$\int_E\dif\varphi(Z)\dif\mubar=0$ for every $\varphi\in C_c^1(E)$.
There is a Lebesgue-full subset of $U$ on which
\begin{equation*}
 \int_{M_y}\dif\psi(Z)\dif\mu_y=0
 \qquad\text{for every }\psi\in H^1(M_y).
\end{equation*}
\end{lemma}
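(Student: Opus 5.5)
The plan is to test the global continuity identity with product test functions $\varphi(x)=\eta(H(x))\,\psi(x)$, where $\eta\in C_c^1(U)$ and $\psi$ ranges over a countable family of smooth functions on $E^+$. Since $Z$ is vertical, $\mathrm dH(Z)=0$, and therefore
\[
 \dif\varphi(Z)=\eta(H)\dif\psi(Z).
\]
Substituting this into the hypothesis and disintegrating $\mubar$ via \eqref{eq:occupation-measure} gives
\[
 \int_U\eta(y)\Bigl(\int_{M_y}\dif\psi(Z)\dif\mu_y\Bigr)\dif y=0 .
\]
The inner integral $G_\psi(y)$ lies in $L^1(U)$ by Cauchy--Schwarz, since $Z\in L^2(\mubar)$ and $\psi$ has bounded gradient on the compact set $K$. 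Because $\eta\in C_c^1(U)$ is arbitrary, $G_\psi=0$ for a.e.\ $y$.

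To pass from one $\psi$ to all $\psi\in H^1(M_y)$ simultaneously, I will use the trivialization $\Phi$ of \Cref{ass:local}. Choose a countable set $\{\psi_k\}\subset C^\infty(M)$ that is dense in $H^1(M,g_0)$, and transport each to $E^+$ by $\tilde\psi_k(x)=\psi_k(\mathrm{pr}_M\Phi^{-1}(x))$. These functions are smooth; they need not have compact support in $E$, but the factor $\eta(H)$ supplies compact support, because $H$ is proper on $E^+$ and $\operatorname{supp}\eta\Subset U$. Intersecting the countably many full-measure sets gives a single Lebesgue-full set $U_0\subset U$ on which $G_{\tilde\psi_k}(y)=0$ for all $k$. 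I will shrink $U_0$ further to the full-measure set where
\[
 \int_{M_y}|Z|_g^2\dif\mu_y<\infty ,
\]
which holds by Fubini because $Z\in L^2(\mubar)$.

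Fix $y\in U_0$ and an arbitrary $\psi\in H^1(M_y)$. Its pullback $\psi\circ\Phi_y$ belongs to $H^1(M)$, because $g_y$ and $h_y$ are bounded above and below relative to $g_0$, so the norms on $H^1(M_y)$ and $H^1(M)$ are equivalent. Choose $\psi_{k_j}\to\psi\circ\Phi_y$ in $H^1(M)$. Then $\dif\tilde\psi_{k_j}|_{M_y}\to\dif\psi$ in $L^2(\mu_y)$, and the linear functional $\psi\mapsto\int_{M_y}\dif\psi(Z)\dif\mu_y$ is bounded by $\|Z\|_{L^2(\mu_y)}\|\dif\psi\|_{L^2(\mu_y)}$. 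It therefore vanishes on all of $H^1(M_y)$. The one point needing care is that $\dif\tilde\psi_k(Z)$ on $M_y$ involves only the fibre differential of $\psi_k\circ\Phi_y^{-1}$, again because $Z$ is vertical.

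The main obstacle is measure-theoretic bookkeeping, not analysis. The exceptional null set must be chosen independently of $\psi$, which the countable dense family handles. The fibrewise restriction of $Z$ must be meaningful, which requires fixing a Borel representative of $Z$ so that its restriction to $M_y$ is defined for a.e.\ $y$; Fubini along the trivialization $\Phi$ provides this.
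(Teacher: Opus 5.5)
Your proposal is correct and follows the paper's proof essentially step for step. Both use product tests $\eta(y)\psi(z)$ in the trivialization, verticality of $Z$ to kill the $\eta$-derivative, the fundamental lemma for each fixed smooth $\psi$, a countable smooth dense subset of $H^1(M,g_0)$, and extension to all $H^1(M_y)$ tests by Cauchy--Schwarz and norm equivalence. Your extra bookkeeping fills in details the paper leaves implicit: compact support of the test via properness of $H$, and restriction to the a.e.\ set where $\|Z\|_{L^2(\mu_y)}<\infty$.
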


\begin{proof}
In the fixed coordinates, test with
$\varphi(\Phi(y,z))=\eta(y)\psi(z)$. Verticality removes derivatives
of $\eta$, and the fundamental lemma gives the desired equality for
almost every $y$, initially for smooth $\psi$. Intersect the full-measure
sets for a countable smooth dense subset of $H^1(M,g_0)$. On this
intersection, Cauchy--Schwarz and uniform equivalence of the fibre norms
extend the equality to every $H^1$ test.
\end{proof}

\begin{proof}[Proof of \Cref{prop:local-ot}]
We first solve the Poisson equation, then verify that the resulting
velocity minimizes the energy. Write $\Phi^*n_i=e_i+\beta_i$, where
$\beta_i$ is the component tangent to the fixed fibre $M$.
In these coordinates the equation becomes
\begin{equation*}
 \int_M g_y(\nabla^{g_y}u_i,\nabla^{g_y}\psi)h_y\dif\mathrm{vol}_{g_0}
 =\int_M\psi\,\partial_i h_y\dif\mathrm{vol}_{g_0}
 -\int_M\dif\psi(\beta_i)h_y\dif\mathrm{vol}_{g_0}.
\end{equation*}
The right-hand side vanishes on constants because $\int_M h_y\dif\mathrm{vol}_{g_0}=1$;
the left-hand side depends only on gradients. We therefore solve on
functions with zero $g_0$-mean. Positivity of $h_y$ and Poincar\'e's
inequality make the bilinear form coercive, locally uniformly in $y$.
Lax--Milgram gives a unique solution
\citep[Section~6.2.1]{Evans2010}. Its smooth parameter dependence follows
by differentiating the inverse coercive operator on this fixed space.
Elliptic regularity, applied also to the differentiated equations, gives
all spatial Sobolev orders and hence joint smoothness. Subtracting the
$h_y$-mean gives the required normalization. Integration by parts in $y$
recovers \eqref{eq:poisson}; compact test support removes the boundary term.
The construction on $U^+$ has bounded derivatives over $\overline U$.

For uniqueness in the stated weak space, subtract two normalized
solutions and apply \Cref{lem:fibre-disintegration} with their
difference as test function. Its fibre gradient vanishes; connectedness
and zero mean make the difference zero.

It remains to prove minimality. The field $v_i$ is feasible by
\eqref{eq:poisson}. For another feasible $V$, their difference $Z=V-v_i$
is vertical and satisfies the homogeneous continuity constraint.
\Cref{lem:fibre-disintegration}, tested with $u_i$, gives
$\int_E g(\nabla_{M_y}u_i,Z)\dif\mubar=0$.
The normal part $n_i$ is pointwise orthogonal to $Z$, so
$\int_E g(v_i,Z)\dif\mubar=0$. Expanding
$\|v_i+Z\|_2^2$ proves the projection identity and minimality.
\end{proof}

\begin{proof}[Proof of \Cref{prop:wasserstein-speed}]
We bound the Wasserstein increment from above using the flow coupling
and from below using a test potential. Fix $y,i$ and set
$a=\|v_i\|_{L^2(\mu_y)}$. In bundle coordinates,
$\Phi^*v_i=e_i+w_i$ and the continuity equation reads
\begin{equation}
 \partial_i h_y+\diver_{g_0}(h_yw_i)=0.
 \label{eq:smooth-continuity}
\end{equation}
Here $w_i$ is the fibre component of $v_i$, and $\diver_{g_0}$ is
divergence with respect to $\mathrm{vol}_{g_0}$.
Let $Z_h=\Phi_{y+he_i}^{-1}\circ\Fl_{v_i}^h\circ\Phi_y$
be the flow in fibre coordinates, and let $J_h$ be its volume
Jacobian relative to $\mathrm{vol}_{g_0}$. The continuity equation gives
$\frac{\mathrm d}{\mathrm dh}[h_{y+he_i}(Z_h)J_h]=0$. Consequently
\begin{equation}
 (\Fl_{v_i}^{h})_\#\mu_y=\mu_{y+he_i}
 \label{eq:coordinate-flow-marginals}
\end{equation}
for either sign of $h$. The resulting coupling and the length of flow
curves give $\limsup_{h\to0}W_2/|h|\leq a$, uniformly on compact
coordinate intervals, and hence local Lipschitz continuity.

For the reverse bound, normal tubular coordinates about $M_y$ give a
compactly supported smooth potential
$\varphi(\exp_x\nu)=u_i(x)+g(n_i(x),\nu)$ near the zero section,
where $x\in M_y$, $\nu\perp T_xM_y$, and $\exp_x$ is the Riemannian
exponential map.
Its gradient on $M_y$ is $v_i$. Its Hessian is bounded, so Taylor's
formula along minimizing geodesics and Cauchy--Schwarz imply
\[
 \left|\int\varphi\dif\mu_{y+he_i}-\int\varphi\dif\mu_y\right|
 \leq aW_2(\mu_y,\mu_{y+he_i})+C W_2(\mu_y,\mu_{y+he_i})^2.
\]
Here completeness supplies minimizing geodesics; the inequality follows
first for each coupling and then for the infimum. Differentiation along
the flow gives signed difference $ha^2+O(h^2)$, whose absolute value is
$|h|a^2+O(h^2)$.
Since $W_2=O(|h|)$, division by $|h|$ gives the lower bound when $a>0$;
the case $a=0$ follows from the upper bound. Integration proves
\eqref{eq:wasserstein-dirichlet}.
\end{proof}

\subsection{Couplings, residuals, and attainment}
\label{subsec:proof-representatives}

We now pass from deterministic velocities to laws of particle surfaces.
The embedding in \eqref{eq:section-space} lets us work in ordinary Sobolev
spaces. Tubular projection and a cutoff extend maps, forms, and the
metric near $K$ to bounded ambient coefficients with bounded derivatives.
Since Sobolev functions are defined only almost everywhere, we first
choose measurable values before integrating with respect to a law $Q$.

\begin{lemma}[Product representatives and occupation kernels]
\label{lem:representatives}
The space $\mathcal S$ is a closed subset of strong $H^1(U;\R^N)$, hence
standard Borel. It admits jointly Borel representatives of its values and
weak derivatives. For each fixed bounded $C^1$ ambient map $G$ with bounded
derivative and each Borel probability $Q$ on $\mathcal S$, the Sobolev
chain rule
\begin{equation*}
 \partial_i(G\circ\sigma)(y)=\mathrm dG_{\sigma(y)}\,\partial_i\sigma(y)
\end{equation*}
holds on a $Q\otimes\dif y$-full set. The section derivatives are tangent
to $\X$ and satisfy $\mathrm dH(\partial_i\sigma)=e_i$ on such a full set.
The occupation measure induced by $Q$ is a finite Borel measure with first
marginal $\dif y$, supported on $H(x)=y$, and it has a probability kernel
$q_y$. Constraint~\eqref{eq:occupation} is equivalent to $q_y=\mu_y$
for almost every $y$.
\end{lemma}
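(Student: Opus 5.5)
The argument has four parts: closedness of $\mathcal S$, Borel representatives, the chain rule and tangency, and the occupation kernel. \emph{Closedness:} if $\sigma_n\to\sigma$ in $H^1(U;\R^N)$, pass to a subsequence converging almost everywhere. Since $K$ is closed, $\sigma(y)\in K$ a.e. Since $H$ (extended smoothly near $K$) is continuous, $H\circ\sigma=\operatorname{id}_U$ a.e. A closed subset of the separable Hilbert space $H^1(U;\R^N)$ is Polish, hence standard Borel.

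\emph{Representatives:} I would use mollification. Fix a sequence of Lipschitz extension operators from $U$ to $\R^2$, or simply mollifiers at scale $1/k$ on interior sets. The maps $(\sigma,y)\mapsto\sigma*\rho_{1/k}(y)$ and $(\sigma,y)\mapsto\partial_i\sigma*\rho_{1/k}(y)$ are continuous on $\mathcal S\times U_k$, where $U_k$ exhausts $U$. Define the representative as $\limsup_k$ where this limit exists and as a fixed point of $K$ otherwise; the derivative representatives are defined in the same way. This is jointly Borel. For each $\sigma$ the limit equals the Sobolev class at Lebesgue points, so it holds for a.e.\ $y$, and Fubini gives a $Q\otimes\dif y$-full set for every $Q$. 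Equivalently, the evaluation maps $\mathcal S\to L^2(U)$ are continuous, and one can use a Borel choice of representatives of continuous $L^2$-valued maps.

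\emph{Chain rule and tangency:} for a fixed bounded $C^1$ map $G$ with bounded derivative, the classical Sobolev chain rule holds for each $\sigma$ a.e.\ in $y$ (Ziemer, or Evans--Gariepy). Since both sides have jointly Borel representatives, the set where they agree is Borel, and its $y$-sections have full measure; Fubini then makes it $Q\otimes\dif y$-full. Applying this to $G=H$ (extended), with $H\circ\sigma=y$, gives $\mathrm dH(\partial_i\sigma)=e_i$. For tangency, take $G$ to be the tubular nearest-point projection $\pi$ onto $\X$ near $K$. Then $\pi\circ\sigma=\sigma$ gives $\partial_i\sigma=\mathrm d\pi_{\sigma}\partial_i\sigma\in T_{\sigma}\X$, using that $\mathrm d\pi$ at points of $\X$ is the orthogonal projection onto the tangent space. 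Only countably many such $G$ are needed.

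\emph{Occupation kernel:} define $\Theta(B)=\int\!\!\int\mathbf 1_B(y,\sigma(y))\dif Q\dif y$ on $U\times E$, which is a Borel measure by joint measurability, with mass $|U|$ and first marginal $\dif y$. It is concentrated on $\{H(x)=y\}$ by the a.e.\ identity $H\circ\sigma=\operatorname{id}$. Disintegration on the standard Borel space $U\times K$ gives a probability kernel $q_y$ concentrated on $M_y$. Constraint~\eqref{eq:occupation} states $\Theta=\dif y\otimes\mu_y$, and uniqueness of disintegration (a.e.\ $y$) gives the equivalence with $q_y=\mu_y$; the converse direction is immediate. I expect the main obstacle to be the Borel measurability of the representatives uniformly in $\sigma$, in particular handling the boundary of $U$ and making the limsup construction converge a.e.\ for every $\sigma$. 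Working on interior compact exhaustions avoids the boundary, and Lebesgue differentiation handles the rest.
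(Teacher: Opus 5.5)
Your proposal is correct and follows essentially the same route as the paper. The shared steps are: almost-everywhere subsequences for closedness; local averages plus Lebesgue differentiation for jointly Borel representatives; the per-section chain rule followed by Tonelli; the tubular retraction and $H$ for tangency and $\mathrm dH(\partial_i\sigma)=e_i$; and disintegration of the pushforward of $Q\otimes\dif y$. The paper differs only in using ball averages of zero extensions and the fallbacks $\Phi(y,z_*)$ and $n_i$, which enforce the section constraints everywhere.

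The one step you assert without justification is that $\partial_i(G\circ\sigma)$ has a jointly Borel representative. The paper derives this from the $H^1$-continuity of $\sigma\mapsto G\circ\sigma$. In your mollifier setup it follows at once from $\partial_i(G\circ\sigma)*\rho_{1/k}=(G\circ\sigma)*\partial_i\rho_{1/k}$ on $U_k$, which is jointly continuous because $G$ is Lipschitz.
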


\begin{proof}
Strong $H^1$ convergence admits an almost-everywhere convergent
subsequence. Closedness of $K$ and continuity of $H$ preserve the
constraints, so $\mathcal S$ is closed.

We next choose representatives by local averaging. For an $L^2$ function extended
by zero, averages over balls of radius $1/k$ are jointly continuous
in the function and the centre. Their limits, set to zero where they
fail to exist, are jointly Borel and represent the function almost
everywhere by Lebesgue differentiation. Apply this construction to
$\sigma$ and $\partial_i\sigma$, and fix a reference point $z_*\in M$.
Retain a value only if the averages
converge to $x\in K$ with $H(x)=y$; otherwise use $\Phi(y,z_*)$.
Retain a derivative only if its averages converge to a tangent vector
at the selected value with $\mathrm dH$-image $e_i$; otherwise use
$n_i$ at that value. All failure sets are Borel.

The Sobolev chain rule for bounded $C^1$ maps follows along almost
every coordinate line \citep{Ziemer1989}. Applied to a tubular
retraction and to $H$, it proves tangency and the section derivative
constraint. Thus these replacements change neither the Sobolev functions
nor their weak derivatives. Composition
with $G$ is continuous in $H^1$: the derivative difference splits into
a bounded coefficient times a strongly convergent derivative and a
coefficient converging in measure times a fixed $L^2$ function.
Truncation controls the latter term. The set where the chain rule fails
is therefore Borel. It has zero $y$-measure for each section, so Tonelli's
theorem gives zero $Q\otimes\dif y$-measure. Finally push
$Q\otimes\dif y$ forward by $(\sigma,y)\mapsto(y,\sigma(y))$.
Disintegration of this finite measure on standard Borel spaces gives
$q_y$, and uniqueness of disintegration gives the last assertion.
\end{proof}

The next field is the average particle velocity at a given position.
It allows us to apply the deterministic local optimality result to a
random family of sections.

\begin{lemma}[Barycentric flux]
\label{lem:barycentric-flux}
For every $Q\in\Qcoh$ and $i=s,t$ there is a unique
$b_i^Q\in L^2(E,\mubar;TE)$ such that
\begin{equation}
 \int_{\mathcal S\times U}g(a(\sigma(y)),\partial_i\sigma(y))\dif Q\dif y
 =\int_E g(a,b_i^Q)\dif\mubar
 \label{eq:barycentric-flux}
\end{equation}
for all $a\in L^2(E,\mubar;TE)$. Moreover $b_i^Q$ is feasible for
the local problem \eqref{eq:local-minimum}.
\end{lemma}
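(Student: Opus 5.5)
The plan is to obtain $b_i^Q$ from the Riesz representation theorem on the Hilbert space $L^2(E,\mubar;TE)$, and then to check the two feasibility constraints by testing \eqref{eq:barycentric-flux} with suitable fields.

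\textbf{Existence and uniqueness.} By \Cref{lem:representatives}, the pair $(\sigma(y),\partial_i\sigma(y))$ has a jointly Borel representative. Hence the left side of \eqref{eq:barycentric-flux} is well defined for every Borel field $a$. Cauchy--Schwarz on $\mathcal S\times U$ gives
\[
\Bigl|\int g(a(\sigma),\partial_i\sigma)\dif Q\dif y\Bigr|
\le\Bigl(\int|a(\sigma(y))|_g^2\dif Q\dif y\Bigr)^{1/2}\bigl(2\Aact(Q)\bigr)^{1/2}.
\]
By the occupation constraint \eqref{eq:occupation}, the first factor equals $\|a\|_{L^2(\mubar)}$; this identity extends from bounded Borel $a$ to $L^2$ fields by monotone convergence. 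The same identity shows that the functional depends only on the $\mubar$-class of $a$: if $a=0$ $\mubar$-a.e., then $a(\sigma(y))=0$ for $Q\otimes\dif y$-a.e.\ $(\sigma,y)$. The map $a\mapsto\int g(a(\sigma),\partial_i\sigma)\dif Q\dif y$ is therefore a bounded linear functional on $L^2(E,\mubar;TE)$. Riesz then gives a unique $b_i^Q$ satisfying \eqref{eq:barycentric-flux}, with $\|b_i^Q\|\le(2\Aact(Q))^{1/2}$.

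\textbf{Constraint $\mathrm dH(b_i^Q)=e_i$.} Take $a=(\mathrm dH)^*\xi$ for a bounded Borel covector field $\xi$ on $E$ with values in $\R^2$. Then
\[
g(a,\partial_i\sigma)=\xi\cdot\mathrm dH(\partial_i\sigma)=\xi_i
\]
on a full set, by \Cref{lem:representatives}. By occupation, the left side of \eqref{eq:barycentric-flux} becomes $\int_E\xi\cdot e_i\dif\mubar$, while the right side is $\int_E\xi\cdot\mathrm dH(b_i^Q)\dif\mubar$. Since $\xi$ is arbitrary, $\mathrm dH(b_i^Q)=e_i$ $\mubar$-a.e.

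\textbf{Continuity constraint.} This is the main step. For $\varphi\in C_c^1(E)$, take $a=\nabla\varphi$. Then
\[
\int_E\dif\varphi(b_i^Q)\dif\mubar=\int\partial_i(\varphi\circ\sigma)(y)\dif Q\dif y.
\]
This uses the chain rule of \Cref{lem:representatives}, applied to a bounded $C^1$ ambient extension of $\varphi$ (via the embedding and a tubular cutoff). The extension agrees with $\varphi$ on $K$, so the composition is unchanged.

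The difficulty is that $\varphi\circ\sigma$ need not vanish near $\partial U$ as a function of $y$ for each individual section, so we cannot directly write $\int_U\partial_i(\varphi\circ\sigma)\dif y=0$. However, $\operatorname{supp}\varphi\subset H^{-1}(K')$ for some compact $K'\subset U$, and $H(\sigma(y))=y$ a.e. Hence $\varphi(\sigma(y))=0$ for a.e.\ $y\notin K'$, so $\varphi\circ\sigma\in H^1(U)$ has compact support in $U$.

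For such a function, $\int_U\partial_i(\varphi\circ\sigma)\dif y=0$: test the weak derivative against a cutoff $\eta\in C_c^\infty(U)$ with $\eta\equiv1$ on $K'$, noting that $\partial_i\eta=0$ wherever $\varphi\circ\sigma\neq0$. Integrability of $\partial_i(\varphi\circ\sigma)$ in $L^1(Q\otimes\dif y)$ follows from the bounded derivative of $\varphi$ and finite action, so Fubini applies. Integrating over $Q$ then gives
\[
\int_E\dif\varphi(b_i^Q)\dif\mubar=0.
\]

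Together with the previous step, this shows that $b_i^Q$ is feasible for the local problem \eqref{eq:local-minimum}.
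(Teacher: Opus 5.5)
Your proposal is correct and follows the paper's proof essentially step for step. Both arguments bound the functional via Cauchy--Schwarz and the occupation constraint and then apply Riesz representation; both test with $(\mathrm dH)^*\xi$ to obtain $\mathrm dH(b_i^Q)=e_i$; and both test with $\nabla\varphi$, using the chain rule, compact support of $\varphi\circ\sigma$ in $U$, and Fubini, to obtain the continuity constraint. The differences are cosmetic: you spell out well-definedness on $\mubar$-classes, and you prove $\int_U\partial_i(\varphi\circ\sigma)\dif y=0$ with an explicit cutoff (best chosen equal to one on a neighbourhood of $H(\operatorname{supp}\varphi)$), whereas the paper cites membership in $H^1_0(U)$.
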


\begin{proof}
The marginal constraint and Cauchy--Schwarz bound the functional on the
left by $\|a\|_2(\int|\partial_i\sigma|^2\dif Q\dif y)^{1/2}$.
The Riesz representation theorem therefore gives $b_i^Q$.
We check its two constraints. Testing with $(\mathrm dH)^*\lambda$ for
$\lambda\in C_c^\infty(E;\R^2)$ gives
$\mathrm dH(b_i^Q)=e_i$. For $\varphi\in C_c^1(E)$, the chain rule
gives $\dif\varphi(\partial_i\sigma)=\partial_i(\varphi\circ\sigma)$.
The composition has support in the fixed compact set
$H(\supp\varphi)\Subset U$, hence lies in the zero-trace Sobolev space
$H^1_0(U)$ and its derivative
integrates to zero. Fubini proves the weak continuity constraint.
\end{proof}

To obtain one admissible coupling, we fix a reference fibre and transport
its particles along straight segments in parameter space. Other paths
need not give the same maps.

\begin{lemma}[Radial comparison without flatness]
\label{lem:radial-comparison}
Under \Cref{ass:geometry}, every star
centre $y_\star$ of $U$ admits a smooth family of diffeomorphisms
$T_y:M_{y_\star}\to M_y$, with $T_{y_\star}=\operatorname{id}$ and
$(T_y)_\#\mu_{y_\star}=\mu_y$. It has a smooth $E^+$-valued extension
near $\overline U\times M_{y_\star}$, with bounded first derivatives on
this compact set. No curvature assumption is required.
\end{lemma}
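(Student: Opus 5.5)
Given a star centre $y_\star$ of $U$, we transport particles radially. The time-one flow of the non-autonomous field
\[
N^{y}=(y_s-y_{\star,s})\,v_s+(y_t-y_{\star,t})\,v_t
\]
carries $M_{y_\star}$ to $M_y$, and we set
\[
T_y=\Fl^{1}_{N^{y}}\big|_{M_{y_\star}}.
\]
This mirrors the proof of \Cref{ass:local}, with the smooth local OT fields $v_i$ of \Cref{prop:local-ot} in place of the normals $n_i$. Since $\mathrm dH(N^y)=y-y_\star$ is constant, the projected trajectory is the straight segment $\tau\mapsto y_\star+\tau(y-y_\star)$, which stays in $U^+$ by star-shapedness. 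Properness of $H$ on $E^+$ confines the lifted curves to the compact set $H^{-1}$ of a compact neighbourhood of the segment. The flow therefore exists up to time one, and reversing the segment gives the inverse, so each $T_y$ is a diffeomorphism onto $M_y$ with $T_{y_\star}=\operatorname{id}$. Because the $v_i$ extend smoothly to $E^+$ and the segments from $y_\star$ to points near $\overline U$ lie in a compact subset of $U^+$ (we use that $y_\star$ is also a star centre of $U^+$, or shrink to a small neighbourhood of $\overline U$ inside $U^+$), smooth dependence of ODE solutions on parameters gives a smooth $E^+$-valued extension of $(y,z)\mapsto T_y(z)$ near $\overline U\times M_{y_\star}$. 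Bounded first derivatives on this compact set then follow from continuity.

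The marginal property is the substantive step. For fixed $y$ and $\tau\in[0,1]$, let $\nu_\tau=(\Fl^{\tau}_{N^y})_\#\mu_{y_\star}$; this is a measure on $M_{y_\star+\tau(y-y_\star)}$. The aim is to show $\nu_\tau=\mu_{y_\star+\tau(y-y_\star)}$. Work in the bundle coordinates $\Phi$ of \Cref{ass:local}, where $\Phi^*v_i=e_i+w_i$. Summing the smooth continuity equations \eqref{eq:smooth-continuity}, $\partial_ih_y+\diver_{g_0}(h_yw_i)=0$, with weights $(y-y_\star)_i$ shows that the curve $\tau\mapsto h_{y_\star+\tau(y-y_\star)}$ solves the continuity equation driven by the fibre field $\sum_i(y-y_\star)_iw_i$. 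This is exactly the fibre component of $N^y$ along the segment. As in the proof of \Cref{prop:wasserstein-speed}, write $Z_\tau$ for the flow in fibre coordinates and $J_\tau$ for its $g_0$-volume Jacobian. Then $\frac{\mathrm d}{\mathrm d\tau}[h_{y_\tau}(Z_\tau)J_\tau]=0$, which follows from Liouville's formula $\partial_\tau J_\tau=(\diver_{g_0}W)(Z_\tau)J_\tau$ combined with the chain rule. Hence $h_{y_\tau}(Z_\tau)J_\tau=h_{y_\star}$, and this is precisely $(Z_\tau)_\#(h_{y_\star}\mathrm{vol}_{g_0})=h_{y_\tau}\mathrm{vol}_{g_0}$. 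Taking $\tau=1$ gives $(T_y)_\#\mu_{y_\star}=\mu_y$.

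Curvature never enters. Each endpoint map is defined using a single path, so no consistency between different paths is required. Uniqueness of the ODE and the uniqueness statement of \Cref{prop:local-ot} show that $v_i$ do not depend on the chosen domain.

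I expect the main obstacle to be the careful handling of the extension and uniformity near $\overline U$. We need the segments from $y_\star$ to every point of a neighbourhood of $\overline U$ to remain in a fixed compact subset of $U^+$. I would first take a compact star-shaped neighbourhood, namely the union of segments from $y_\star$ to a small closed neighbourhood of $\overline U$. Its compact closure lies in $U^+$ because $U^+$ is open and star-shaped; if the paper's hypothesis does not make $y_\star$ a star centre of $U^+$, I would shrink this neighbourhood to a thin one around $\overline U$ using $\overline U\Subset U^+$. Compactness of $H^{-1}$ of that set then gives uniform flow existence and the derivative bounds.
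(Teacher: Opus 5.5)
Your proposal is correct and follows essentially the same argument as the paper: the time-one flow of $(y-y_\star)_sv_s+(y-y_\star)_tv_t$, with properness and segment reversal for existence and invertibility, smooth parameter dependence for the extension, and conservation of $h_{y_\star+\tau(y-y_\star)}(Z_\tau)J_\tau$ in fibre coordinates for the marginals. For the compactness step, your fallback is the right reason, and it is the one the paper uses. Every segment from $y_\star$ to a point of $\overline U$ lies in $\overline U$, so the radial hull of an $\epsilon$-neighbourhood of $\overline U$ stays in that $\epsilon$-neighbourhood and is compactly contained in $U^+$ for small $\epsilon$; this does not require $y_\star$ to be a star centre of $U^+$.
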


\begin{proof}
For $a=y-y_\star$, set $W_y=a_sv_s+a_tv_t$ and
$T_y=\Fl_{W_y}^1|_{M_{y_\star}}$. The projected path is
$y_\star+\tau a$. The radial hull of a sufficiently small neighbourhood
of $\overline U$ is compactly contained in $U^+$; properness confines
all lifts to a compact set. Continuation, reversal of the segment,
and smooth parameter dependence give the asserted extension and
diffeomorphisms. In fibre coordinates, \eqref{eq:smooth-continuity}
implies conservation of $h_{y_\star+\tau a}(Z_\tau)J_\tau$ along the
flow of $a_sw_s+a_tw_t$. Here $Z_\tau$ is the corresponding flow in
fibre coordinates and $J_\tau$ its volume Jacobian relative to
$\mathrm{vol}_{g_0}$. Change of variables gives the marginals.
\end{proof}

\begin{proof}[Proof of \Cref{prop:residual}]
The smooth family from \Cref{lem:radial-comparison} defines a law of sections by pushing
$\mu_{y_\star}$ forward under $z\mapsto[T(\cdot,z)]$. This map is
continuous in $H^1$, and bounded derivatives give finite action.
For any admissible $Q$, the averaged velocity $b_i^Q$ is feasible for
the local problem. Hence local optimality gives the cancellation
\[
 \int g(v_i(\sigma),r_i)\dif Q\dif y
 =\int_E g(v_i,b_i^Q-v_i)\dif\mubar=0,
\]
where the first equality uses \eqref{eq:barycentric-flux} and the
marginal constraint. The chain rule makes $r_i$ vertical. Expanding
$|\partial_i\sigma|^2=|v_i(\sigma)+r_i|^2$, the mixed term vanishes
by this equality and the remaining terms give the residual identity.
The finite-action coupling constructed above makes the gap finite.
\end{proof}

\begin{proof}[Proof of \Cref{prop:coherent-attainment}]
We use the direct method for the infimum in \eqref{eq:gap}: compactness
gives a limiting law, and we then
check that its marginals are unchanged.

\textit{Step 1: compact energy sublevels.}
Regard sections as elements of $\mathcal L=L^2(U;\R^N)$, and extend
their Dirichlet energy to $+\infty$ outside $\mathcal S$.
This functional is lower semicontinuous with compact sublevels.
Indeed, compactness of $K$ bounds values, and uniform metric
equivalence bounds derivatives of a bounded-energy sequence.
Rellich compactness gives strong $L^2$ convergence along a subsequence;
an almost-everywhere convergent subsequence preserves the section
constraints. For an ambient
positive-definite extension $\mathbf G$ of $g$, boundedness and
convergence in measure give
$\mathbf G(u_n)^{1/2}\partial_i u_n\rightharpoonup
\mathbf G(u)^{1/2}\partial_i u$ in $L^2$: test against a fixed $L^2$
field, transfer the matrix to the test field, and use its strong $L^2$
convergence. Weak lower semicontinuity of the squared norm proves the claim.

\textit{Step 2: convergence of the laws.}
For a minimizing sequence of laws, Markov's inequality on these compact
sublevels gives tightness in $\mathcal L$. Prokhorov's theorem gives
a narrowly convergent subsequence. Portmanteau's theorem applied to the
extended energy shows that the limiting law has no larger action and
is concentrated on $\mathcal S$.
The continuous injection of the closed $H^1$ space $\mathcal S$ into
$\mathcal L$ has a Borel inverse on its image by Lusin--Souslin
\citep[Section~15]{Kechris1995}. Thus the limit defines the required
Borel law on $\mathcal S$.

\textit{Step 3: preservation of marginals.}
Point evaluation is not continuous in $L^2$, so we instead use the bounded
continuous functionals $u\mapsto\int_U\eta(y)\varphi(u(y))\dif y$, with
$\eta\in C_c(U)$ and $\varphi\in C_b(\R^N)$.
Here $C_c$ and $C_b$ denote continuous functions with compact support
and bounded continuous functions, respectively. Their convergence follows
from convergence in measure. Finite sums of these products are dense
in $C_0(U\times K)$, the continuous functions vanishing at infinity,
so the limiting occupation measure is the prescribed
one. The limit is admissible and attains the infimum.
\end{proof}

\subsection{Curvature and flatness}
\label{subsec:proof-stokes}

For a smooth particle surface, mixed parameter derivatives commute.
\Cref{lem:weak-stokes} expresses the resulting integration-by-parts identity
in a form that also holds for Sobolev surfaces. In the statement,
$T^*$ denotes a cotangent bundle, $\Lambda^2T^*U$ the bundle of
alternating two-forms on $U$, and $\mathcal D'(U)$ the space of distributions.

\begin{lemma}[Weak Stokes for Sobolev sections]
\label{lem:weak-stokes}
Let $\sigma\in\mathcal S$ and let
$\theta\in C_c^1(E^+;T^*E^+)$ satisfy
$H(\operatorname{supp}\theta)\Subset U$. Then
$\sigma^*\theta\in L^2(U;T^*U)$,
$\sigma^*(\dif\theta)\in L^1(U;\Lambda^2T^*U)$, and
\begin{equation*}
 \dif(\sigma^*\theta)=\sigma^*(\dif\theta)
       \quad\text{in }\mathcal D'(U),\qquad
 \int_U\sigma^*(\dif\theta)(e_s,e_t)\dif y=0.
\end{equation*}
\end{lemma}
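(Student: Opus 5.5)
We prove the lemma by smooth approximation of $\sigma$, using the ambient embedding of a neighbourhood of $K$ in $\R^N$. Extend $\theta$ by tubular projection and a cutoff to a bounded $C^1$ one-form $\Theta=\sum_k\Theta_k(x)\dif x^k$ on $\R^N$ with bounded derivatives, agreeing with $\theta$ near $K$. By \Cref{lem:representatives}, $\sigma$ takes values in $K$ and its weak derivatives are tangent, so pointwise a.e.
\[
 \sigma^*\theta=\sum_k\Theta_k(\sigma)\,\partial_j\sigma^k\dif y^j ,
 \qquad
 \sigma^*(\dif\theta)(e_s,e_t)=\dif\Theta_{\sigma}(\partial_s\sigma,\partial_t\sigma).
\]
The first expression is bounded coefficients times $L^2$ derivatives, hence lies in $L^2$. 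The second is a bounded bilinear form in two $L^2$ derivatives, hence lies in $L^1$ by Cauchy--Schwarz. Restricting $\dif\Theta$ to tangent vectors recovers $\dif\theta$, since the tubular extension agrees with $\theta$ on $T\X$ near $K$.

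For the distributional identity, mollify: $\sigma_\epsilon=\sigma*\rho_\epsilon$ on compact subsets of $U$. Then $\sigma_\epsilon\to\sigma$ in $H^1_{\mathrm{loc}}$ and almost everywhere along a subsequence, but $\sigma_\epsilon$ leaves $K$. This is why we work with the ambient $\Theta$. For smooth $\sigma_\epsilon$, the classical identity $\dif(\sigma_\epsilon^*\Theta)=\sigma_\epsilon^*\dif\Theta$ holds by equality of mixed partials. Pair it with a test function $\eta\in C_c^\infty(U)$.

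As $\epsilon\to0$:
\begin{enumerate}
 \item $\Theta_k(\sigma_\epsilon)\partial_j\sigma_\epsilon^k\to\Theta_k(\sigma)\partial_j\sigma^k$ in $L^1_{\mathrm{loc}}$. This is the product of a bounded coefficient converging a.e. (dominated convergence) and an $L^2$-convergent derivative, the same splitting used in \Cref{lem:representatives}.
 \item $\dif\Theta_{\sigma_\epsilon}(\partial_s\sigma_\epsilon,\partial_t\sigma_\epsilon)\to\dif\Theta_\sigma(\partial_s\sigma,\partial_t\sigma)$ in $L^1_{\mathrm{loc}}$, since products of two $L^2$-convergent sequences converge in $L^1$ and the coefficients are bounded and converge a.e.
\end{enumerate}
This gives $\dif(\sigma^*\Theta)=\sigma^*\dif\Theta$ in $\mathcal D'(U)$. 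By the a.e. identifications above, the same holds with $\theta$ in place of $\Theta$.

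For the vanishing integral, use the support hypothesis. The constraint $H\circ\sigma=\operatorname{id}_U$ a.e. and $H(\operatorname{supp}\theta)\Subset U$ together imply that $\sigma^*\theta$ and $\sigma^*\dif\theta$ vanish a.e. outside the fixed compact set $C=H(\operatorname{supp}\theta)$. Choose $\eta\in C_c^\infty(U)$ with $\eta\equiv1$ on a neighbourhood of $C$. The distributional identity tested against $\eta$ gives
\[
 \int_U\sigma^*(\dif\theta)(e_s,e_t)\,\eta\dif y
 =-\int_U\bigl(\partial_s\eta\,(\sigma^*\theta)_t-\partial_t\eta\,(\sigma^*\theta)_s\bigr)\dif y ,
\]
up to sign convention. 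The right side vanishes because $\nabla\eta=0$ on the support of $\sigma^*\theta$. The left side equals the full integral.

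The main obstacle is the nonlinear passage to the limit in the quadratic term $\dif\Theta(\partial_s\sigma,\partial_t\sigma)$, together with the fact that mollified sections do not stay on $K$. Both are handled by using the bounded ambient extension $\Theta$ and strong $H^1_{\mathrm{loc}}$ convergence; weak convergence alone would not suffice for the quadratic term. One must also check that the a.e. tangency of $\partial_i\sigma$ makes the ambient pullbacks coincide with the intrinsic ones.
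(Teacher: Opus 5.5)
Your proposal is correct and follows essentially the same route as the paper: you extend $\theta$ to a bounded ambient $C^1$ form, approximate $\sigma$ by smooth ambient maps, and pass to the limit in the one-form (bounded convergent coefficient times strongly convergent derivative) and in the two-form (derivative products converging in $L^1$ times bounded convergent coefficients), then identify ambient and intrinsic pullbacks by tangency and test against a cutoff equal to one near $H(\operatorname{supp}\theta)$. The only difference is that you mollify locally with $H^1_{\mathrm{loc}}$ convergence, whereas the paper uses the Lipschitz extension theorem to obtain a global strong $H^1$ approximation; since the identity is tested on $C_c^\infty(U)$, your local version suffices and does not need the Lipschitz boundary.
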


\begin{proof}
For smooth maps the pullback identity follows by differentiating the
coefficients of the form; the mixed second derivatives cancel.
To pass to Sobolev maps, extend $\theta$ to a compactly supported
ambient $C^1$ form.
The Lipschitz extension theorem and mollification approximate $\sigma$
strongly in $H^1$ by smooth ambient maps $u_k$ \citep{AdamsFournier2003}.
The one-form pullbacks converge in $L^2$: coefficients are bounded
and converge in measure, while derivatives converge strongly.
For the two-form, each derivative product converges in $L^1$ by
Cauchy--Schwarz. Multiplication by the bounded convergent coefficient
preserves this limit, by truncating the fixed $L^1$ product.
The smooth pullback identity thus passes to distributions. Tangency
identifies the limit with the intrinsic pullback. Both pullbacks vanish
outside $H(\supp\theta)$. Testing the distributional identity against
a cutoff equal to one near this compact set proves the zero integral.
\end{proof}

Write $\dif_\V\alpha=\dif\widetilde\alpha|_{\V\times\V}$.

\begin{proposition}[Residual--curvature identity]
\label{prop:residual-curvature}
Under \Cref{ass:geometry}, every
$Q\in\Qcoh$ and $\alpha\in C_c^1(E;\V^*)$ satisfy
\begin{equation}
 \int_E\alpha(\Omega)\dif\mubar
 =\int_{\mathcal S\times U}\left[
 \dif\widetilde\alpha(v_s,r_t)-\dif\widetilde\alpha(v_t,r_s)
 +(\dif_\V\alpha)(r_s,r_t)\right]\dif Q\dif y.
 \label{eq:residual-curvature}
\end{equation}
All forms on the right are evaluated at $\sigma(y)$.
\end{proposition}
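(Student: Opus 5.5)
The plan is to apply \Cref{lem:weak-stokes} to the one-form $\theta=\widetilde\alpha$ on each section $\sigma$, expand the pulled-back two-form with $\partial_i\sigma=v_i(\sigma)+r_i$, average over $Q$, and identify the purely horizontal term with the curvature through the occupation constraint.

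\textbf{Step 1 (applying weak Stokes).} First extend $\widetilde\alpha$ to a $C^1$ one-form on $E^+$ by zero. This works because $\alpha$ has compact support in $E$ and $v_s,v_t$ are smooth, so $H(\supp\widetilde\alpha)\Subset U$. If $\alpha$ is only $C^1$, then $\dif\widetilde\alpha$ is merely continuous, and I would instead mollify $\alpha$ and pass to the limit at the end. Both sides of \eqref{eq:residual-curvature} are continuous under $C^1$ convergence with fixed compact support, by finite action and Cauchy--Schwarz, so the limit is harmless. \Cref{lem:weak-stokes} then gives, for every $\sigma\in\mathcal S$,
\[
 0=\int_U\dif\widetilde\alpha(\partial_s\sigma,\partial_t\sigma)\dif y .
\]

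\textbf{Step 2 (expanding the pullback).} Write $\partial_i\sigma=v_i(\sigma)+r_i$, which holds on a $Q\otimes\dif y$-full set by \Cref{lem:representatives}. Bilinearity splits the integrand into four terms:
\[
 \dif\widetilde\alpha(\partial_s\sigma,\partial_t\sigma)=
 \dif\widetilde\alpha(v_s,v_t)+\dif\widetilde\alpha(v_s,r_t)
 -\dif\widetilde\alpha(v_t,r_s)+\dif\widetilde\alpha(r_s,r_t).
\]
Each term is integrable against $Q\otimes\dif y$. The coefficients are bounded, and the mixed and quadratic terms are controlled by the finite action. Since $r_s$ and $r_t$ are vertical, the last term equals $(\dif_\V\alpha)(r_s,r_t)$. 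Integrating over $Q$ and using Fubini turns the Stokes identity into a statement about the averaged terms.

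\textbf{Step 3 (the horizontal term).} This is the key computation. Use the Cartan formula
\[
 \dif\widetilde\alpha(v_s,v_t)=v_s(\widetilde\alpha(v_t))-v_t(\widetilde\alpha(v_s))-\widetilde\alpha([v_s,v_t]).
\]
By construction $\widetilde\alpha(v_i)=0$, so the first two terms vanish. Since $\Omega$ is vertical, $\widetilde\alpha(\Omega)=\alpha(\Omega)$. Hence
\[
 \dif\widetilde\alpha(v_s,v_t)=-\alpha(\Omega).
\]
The occupation constraint \eqref{eq:occupation}, applied to this bounded continuous function of $\sigma(y)$, converts its $Q\otimes\dif y$ integral into $-\int_E\alpha(\Omega)\dif\mubar$. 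Rearranging then yields \eqref{eq:residual-curvature}.

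\textbf{Main obstacle.} The delicate points are the regularity ones. First, $\dif\widetilde\alpha$ requires one derivative of $\alpha$ together with a derivative of the smooth $v_i$; this is fine in the $C^1$ case, but the mollification argument above is the safe route. Second, one must justify measurability and integrability when exchanging the $\sigma$-wise Stokes identity with the $Q$-average; Tonelli together with the finite action handles this. The sign conventions in Cartan's formula should be checked against the stated right-hand side, whose ordering already matches this expansion.
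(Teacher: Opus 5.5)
Your proposal is correct and follows the paper's proof essentially step for step. Like the paper, you apply weak Stokes to $\widetilde\alpha$ on each section, expand with $\partial_i\sigma=v_i+r_i$, obtain $\dif\widetilde\alpha(v_s,v_t)=-\alpha(\Omega)$ from $\widetilde\alpha(v_i)=0$, and average using the occupation constraint and Fubini. Your mollification step is harmless but unnecessary: \Cref{lem:weak-stokes} is already stated for $C^1$ forms and only uses boundedness and continuity of $\dif\theta$.
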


\begin{proof}
We apply \Cref{lem:weak-stokes} after separating each velocity into its local
minimizer and its residual from \eqref{eq:residual}.
Since $\widetilde\alpha(v_i)=0$, the exterior derivative formula gives
$\dif\widetilde\alpha(v_s,v_t)=-\alpha(\Omega)$.
Expand $\dif\widetilde\alpha(\partial_s\sigma,\partial_t\sigma)$
using $\partial_i\sigma=v_i+r_i$. Its integral is zero by weak Stokes.
Average over $Q$ and use occupation on the curvature term. Fubini
applies: the coefficients are bounded, the linear residual terms are
integrable by Cauchy--Schwarz, and the quadratic term belongs to $L^1$.
\end{proof}

\begin{proof}[Proof of \Cref{thm:certificate}]
Fix an admissible law $Q$. Using the coefficients in \eqref{eq:witness-terms},
we estimate the linear and quadratic residual terms separately. Let $z$ be the $L^2(Q\otimes\dif y)$ norm of
$(r_s,r_t)$; the residual identity \eqref{eq:residual-identity} gives $z^2=2(\Aact(Q)-\Eloc)$.
Cauchy--Schwarz bounds the linear terms in the identity \eqref{eq:residual-curvature} by
$B_\alpha z$, and $2|r_s||r_t|\leq|r_s|^2+|r_t|^2$ bounds the
quadratic term by $A_\alpha z^2$. Consequently,
\[
 I_\alpha\leq B_\alpha z+A_\alpha z^2.
\]
For $I_\alpha>0$, feasibility implies $A_\alpha+B_\alpha>0$.
The right-hand side is increasing for $z\geq0$, so solving the quadratic
inequality gives
$z\geq2I_\alpha/(B_\alpha+\sqrt{B_\alpha^2+4A_\alpha I_\alpha})$.
This form of the bound remains valid when $A_\alpha=0$. Squaring and
dividing by two gives the action bound for $Q$. Taking the infimum over
laws and then the supremum over test forms proves the theorem.
\end{proof}

\begin{proof}[Proof of \Cref{thm:flatness}]
\textit{Step 1: nonzero curvature gives positive excess.}
If $\Omega(x_0)\ne0$, choose a nonnegative compactly supported cutoff
$\chi$ positive at $x_0$, and let $\alpha=\chi g(\Omega,\cdot)|_\V$.
Full support of $\mubar$ gives $I_\alpha>0$; hence the gap is positive.

\textit{Step 2: zero curvature gives an optimal realization.}
Assume $\Omega=0$ and use the radial family $F=T$ from \Cref{lem:radial-comparison}.
We must check that this family has both prescribed derivatives.
In the following coordinate calculation, $\mathrm dV$ denotes the
Jacobian of the coordinate components of a vector field $V$.
For $X_\tau=\Fl_{W_y}^{\tau}(z)$, the parameter derivative
$Y_i=\partial_{y_i}X_\tau$ solves
$\dot Y_i=\mathrm dW_y(X_\tau)Y_i+v_i(X_\tau)$, $Y_i(0)=0$.
Since $[W_y,v_i]=0$, the vector $\tau v_i(X_\tau)$ solves the same
equation. Thus $\partial_iF=v_i(F)$ and its law has zero residual.

\textit{Step 3: uniqueness.}
We show that every zero-residual section keeps the same initial label.
Let $\Lambda(x)$ be the initial label given by the
inverse of $F$. The radial extension makes $\Lambda$ smooth near $K$
with bounded derivative, and $\mathrm d\Lambda(v_i)=0$ on $E$.
An optimal law has zero residual. The Sobolev chain rule, after embedding
the label fibre, gives $\partial_i(\Lambda\circ\sigma)=0$. Connectedness
of $U$ makes this label constant almost everywhere, so its sections are
$[F(\cdot,z)]$. The continuous injection $z\mapsto[F(\cdot,z)]$ from
the compact label fibre has continuous inverse on its image. Occupation,
tested with functions of $\Lambda$, determines the label law as
$\mu_{y_\star}$, hence determines $Q_F$. Finally,
$F(\gamma(\tau),z)$ solves the horizontal ordinary differential equation
(ODE) along every piecewise
$C^1$ path. ODE uniqueness proves the endpoint formula and uniqueness
of the normalized smooth family.
\end{proof}

\subsection{The small-square action bounds}
\label{sec:quantitative-proofs}

\begin{proof}[Proof of \Cref{thm:quartic-action}]
For the upper bound we estimate the explicit coupling $Q_r$.
For the lower bound we use a test form supported on $U_r$, so the
estimate applies to every admissible coupling.

\textit{Upper bound.}
Choose $r_*>0$ with $C_*=y_0+[-2r_*,2r_*]^2\subset U$.
All coordinate flows exist on the compact inverse image of $C_*$.
\Cref{eq:coordinate-flow-marginals} gives exact marginals
for $F_r$. Smooth parameter dependence gives bounded derivatives up
to $\overline{U_r}$, so $Q_r$ is admissible. Set
$L=\sup_{H^{-1}(C_*)}\|\nabla v_t\|_{\mathrm{op}}$, where the operator
norm uses $g$. The variational
equation and Gronwall give $\|\mathrm d\Fl_{v_t}^{a}\|\leq e^{L|a|}$.

The last flow is in the $t$ direction, so $\partial_tF_r=v_t(F_r)$
and its $t$ residual is zero. Put $\tau=t-t_0$ and
$a=\Fl_{v_s}^{s-s_0}(z)$. Its $s$ residual is
\[
 R_s=\mathrm d(\Fl_{v_t}^{\tau})_a v_s(a)
       -v_s(\Fl_{v_t}^{\tau}(a)).
\]
Taking the covariant derivative $\nabla_\tau$ along
$\tau\mapsto\Fl_{v_t}^\tau(a)$ gives
$\nabla_\tau R_s=\nabla_{R_s}v_t+\Omega(\Fl_{v_t}^\tau(a))$,
with $R_s(0)=0$. Variation of constants yields
\begin{equation}
 R_s(s,t,z)=\int_0^\tau
 \mathrm d(\Fl_{v_t}^{\tau-u})_{\Fl_{v_t}^{u}(a)}
 \Omega(\Fl_{v_t}^{u}(a))\dif u.
 \label{eq:axial-residual-integral}
\end{equation}
Cauchy--Schwarz on the signed segment, followed by marginal preservation,
therefore gives
$\int|R_s(s,t,z)|^2\dif\mu_{y_0}(z)
\leq e^{2Lr}(t-t_0)^2\sup_{U_r}\kappa^2$.
The residual identity \eqref{eq:residual-identity} and
$\frac12\int_{-r}^r\int_{-r}^r t^2\dif s\dif t=\frac23r^4$
prove \eqref{eq:quartic-upper}. Thus the two factors of $r^2$ come from
the squared velocity correction and the area of the square.

For comparison with the numerical results, the leading coefficient of
this particular construction can also be read from
\eqref{eq:axial-residual-integral}. Write $F_y(z)=F_r(y,z)$ and identify
vertical tangent spaces with $T_zM_{y_0}$ using the fibre diffeomorphism
$F_y$. Smoothness and compactness give, uniformly for $y\in U_r$ and
$z\in M_{y_0}$,
\[
 (\mathrm d_zF_y)^{-1}R_s
   =(t-t_0)\Omega(z)+O(r|t-t_0|),
 \qquad F_y^*g=g|_{M_{y_0}}+O(r).
\]
The oriented integral in \eqref{eq:axial-residual-integral} gives this
expansion for both signs of $t-t_0$. Hence
$|R_s|_g^2=(t-t_0)^2|\Omega(z)|_g^2+O(r(t-t_0)^2)$, and integration
in the residual identity \eqref{eq:residual-identity} yields
\begin{equation}
 \Aact_{U_r}(Q_r)-\Eloc(U_r)
   =\frac23\kappa(y_0)^2r^4+O(r^5).
 \label{eq:axial-leading-action}
\end{equation}
Equation \eqref{eq:axial-leading-action} identifies the leading coefficient
of the ordered construction.

\textit{Lower bound.}
Suppose $\kappa_0=\kappa(y_0)>0$. By continuity, reduce $r_*$ so that
$\kappa_0^2/2\leq\kappa(y)^2\leq2\kappa_0^2$ on $\overline{U_{r_*}}$.
Choose nonnegative, nonzero $\chi\in C_c^\infty((-1,1)^2)$, set
$\chi_r(y)=\chi((y-y_0)/r)$, and test with
$\alpha_r=(\chi_r\circ H)\beta$, where $\beta=g(\Omega,\cdot)|_\V$.
Properness gives compact support. Let $\widetilde\beta$ be the extension
of $\beta$ that annihilates $v_s,v_t$, as for $\widetilde\alpha$ in
\Cref{subsec:curvature-certificate}. The product rule gives, for vertical $R$,
\[
 \dif\widetilde\alpha_r(v_i,R)
 = (\partial_i\chi_r)\beta(R)+\chi_r\dif\widetilde\beta(v_i,R),
 \qquad \dif_\V\alpha_r=\chi_r\dif_\V\beta.
\]
Write $I_r=I_{\alpha_r}$, $B_r=B_{\alpha_r}$, and $A_r=A_{\alpha_r}$,
with the quantities in \eqref{eq:witness-terms} computed on $U_r$.
Consequently there are fixed positive $c_1,C_1$ and finite $C_2,C_3$ with
\[
 I_r\geq c_1\kappa_0^2r^2,\qquad
 B_r\leq C_1\kappa_0+C_2r,\qquad A_r\leq C_3.
\]
Indeed, $I_r=\int_{U_r}\chi_r(y)\kappa(y)^2\dif y$, which gives
the first bound by $y=y_0+r\xi$. In $B_r$, a derivative of $\chi_r$
contributes $r^{-1}$, cancelled by the $r$ from the $L^2$ area factor.
The terms without that derivative are $O(r)$, since the coefficients
of $\dif\widetilde\beta$ are bounded on the compact tube.
Vertical differentiation does not act on $\chi_r\circ H$, giving the
bound for $A_r$. Choose
$r_0\leq\min(r_*,1)$ with $C_2r_0\leq\kappa_0$.
For any admissible $Q$, put $G=\Aact_{U_r}(Q)-\Eloc(U_r)$ and
$z=\sqrt{2G}/(\kappa_0r^2)$. Substituting these estimates into the
curvature bound \eqref{eq:certificate} and dividing by $\kappa_0^2r^2$ gives
$c_1\leq(C_1+1)z+C_3r^2z^2\leq(C_1+1)z+C_3z^2$.
The increasing polynomial forces $z\geq c_2>0$, uniformly in $r,Q$.
This proves the lower bound. The upper bound \eqref{eq:quartic-upper} and continuity
of $\kappa$ prove \eqref{eq:quartic-sandwich}, with constants depending
only on the stated local data.
\end{proof}

\subsection{Ordered flows and the torus calculation}
\label{subsec:proof-order}

\begin{proof}[Proof of \Cref{thm:order-defect}]
We expand the two compositions in the same coordinates. Their
first-order terms agree; the mixed second-order terms give the bracket.
Properness and flow continuation on a compact square about $y_0$ give
both ordered diffeomorphisms and their inverses. Each segment preserves
marginals by \eqref{eq:coordinate-flow-marginals}, proving
\eqref{eq:ordered-marginals}. In bundle coordinates, Taylor expansion gives
\[
 P_{st}^{\varepsilon}(x)
 =x+\varepsilon(v_s+v_t)(x)
 +\varepsilon^2\left(\tfrac12\mathrm dv_s\,v_s
       +\mathrm dv_t\,v_s+\tfrac12\mathrm dv_t\,v_t\right)(x)
 +O(|\varepsilon|^3).
\]
For $P_{ts}^{\varepsilon}$ only the mixed term changes, from
$\mathrm dv_t\,v_s$ to $\mathrm dv_s\,v_t$. A finite chart cover of
the compact starting fibre makes the remainder uniform. Subtracting,
\[
 P_{st}^{\varepsilon}(x)-P_{ts}^{\varepsilon}(x)
 =\varepsilon^2(\mathrm dv_t\,v_s-\mathrm dv_s\,v_t)(x)
   +O(|\varepsilon|^3)
 =\varepsilon^2\Omega(x)+O(|\varepsilon|^3).
\]
Both endpoints are $O(|\varepsilon|)$ from $x$ and their separation is
$O(\varepsilon^2)$. The straight coordinate segment gives a path of
this length. A minimizing short geodesic remains in the same chart;
smoothness compares its metric with $g_x$ with relative error
$O(|\varepsilon|)$. Thus distance differs from the $g_x$-norm of the
coordinate separation by $O(|\varepsilon|^3)$, uniformly on the fibre.
Taking $L^2(\mu_{y_0})$ norms and using the reverse triangle inequality
proves the expansion. For intrinsic distance, use the common endpoint
fibre in bundle coordinates. Its metrics vary smoothly on a compact
family, so the same short-geodesic comparison applies.
\end{proof}

\begin{proof}[Proof of \Cref{cor:squared-defect}]
Write $\delta_{\mathrm{loop}}=c\varepsilon^2+R_\varepsilon$, with
$c=\|\Omega\|_{L^2(\mu_{y_0})}$ and
$|R_\varepsilon|\leq C|\varepsilon|^3$. Squaring gives a remainder
$2c\varepsilon^2R_\varepsilon+R_\varepsilon^2=O(|\varepsilon|^5)$,
also when $c=0$.
\end{proof}

\begin{proof}[Proof of \Cref{prop:torus-example}]
We first realize the proposed conditional densities in the coarea
setting, then calculate their local velocities and curvature.
Choose $\delta<\delta_+<\delta_1$ with $\delta_1(|a|+|b|)<1$,
and a smooth cutoff $0\leq\chi\leq1$ supported in $(-\delta_1,\delta_1)^2$
and equal to one on $U^+=(-\delta_+,\delta_+)^2$. For any smooth
positive probability density $p_0$ on $\R^2$, set
\[
 p(y,x)=\frac{p_0(y)}{(2\pi)^2}
 [1+\chi(y)(as\cos x_1+bt\cos(x_1+x_2))].
\]
This is a smooth positive probability density for product volume.
The projection $H$ is proper, $J_H=1$, and $Z=p_0$, giving the
required conditionals by \eqref{eq:coarea}. The product manifold satisfies the geometric
assumptions.

The normal coordinate fields are constant on the product manifold.
In this calculation, $\nabla$, $\diver$, and
$\Delta=\partial_{x_1}^2+\partial_{x_2}^2$ act in the flat fibre
coordinates $x$.
The fibre potentials solve $-\diver(\rho_y\nabla u_i)=\partial_i\rho_y$.
At the origin, $u_s=a\cos x_1$ and $u_t=(b/2)\cos(x_1+x_2)$.
To compute curvature, we also need the parameter derivatives of these
potentials. For $q=\partial_su_t-\partial_tu_s$ at the origin,
differentiating the two Poisson equations and subtracting gives
$\Delta q=(ab/4)(\cos x_2-\cos(2x_1+x_2))$, so, up to a constant,
$q=ab[-\cos x_2/4+\cos(2x_1+x_2)/20]$.
Substitution into
$\Omega=\nabla q+\mathrm dw_t\,w_s-\mathrm dw_s\,w_t$
gives \eqref{eq:torus-curvature}. Fourier orthogonality yields
$\|\Omega\|_2^2/(a^2b^2)=1/8+1/200+1/50=3/20$.
\Cref{thm:flatness} gives a positive gap, and \Cref{prop:residual}
makes it finite.
\Cref{thm:order-defect} gives \eqref{eq:torus-defect}.
\end{proof}

\section{Conclusions}
\label{sec:conclusions}
For a fixed two-parameter family, realizing both local OT velocities by
one particle law imposes a compatibility constraint. We quantify its
energy cost by a curvature estimate valid for every admissible Sobolev
law. At nonzero-curvature centres, the lower bound and an explicit
ordered construction give the sharp quartic order of the minimum excess
on small squares. Vanishing curvature characterizes zero gap and a
unique minimizing law under the stated regular coarea assumptions.

The fitted two-phase experiment illustrates how changing the coupling
can reduce action while preserving the conditional laws. A natural
next step is to refine the quartic bounds by identifying the optimal
leading coefficient and a construction attaining it.
Further questions concern regularity of minimizing
laws in the nonflat case and extensions beyond smooth positive densities
and compact regular fibres.

\section*{Declarations}
\paragraph{Data source.}
Data are from the UCI \emph{Individual Household Electric Power Consumption}
dataset \citep{HebrailBerard2006}.

\paragraph{AI assistance.}
OpenAI Codex was used to improve the language and presentation of the
manuscript.

\bibliographystyle{cmsplainnat}
\bibliography{references}
\end{document}